\begin{document}

\title[Semilinear nonlocal differential equations]
{Regularity and stability analysis for a class of semilinear nonlocal differential equations in Hilbert spaces}

\author[T.D. Ke, N.N. Thang, L.T.P. Thuy ]{Tran Dinh Ke$^{\natural}$, Nguyen Nhu Thang, Lam Tran Phuong Thuy}

\address{Tran Dinh Ke  \hfill\break
Department of Mathematics, Hanoi National University of Education \hfill\break
136 Xuan Thuy, Cau Giay, Hanoi, Vietnam}
\email{ketd@hnue.edu.vn} 

\address{Nguyen Nhu Thang  \hfill\break
Department of Mathematics, Hanoi National University of Education \hfill\break
136 Xuan Thuy, Cau Giay, Hanoi, Vietnam}
\email{thangnn@hnue.edu.vn} 

\address{Lam Tran Phuong Thuy  \hfill\break
Department of Mathematics, Electric Power University,\hfill\break
235 Hoang Quoc Viet, Hanoi, Vietnam}
\email{thuyltp@epu.edu.vn} 

\subjclass[2010]{34G20,34D20,35B40}
\keywords{nonlocal differential equation; weak solution; mild solution; asymptotic stability}
\thanks{$^\natural$ Corresponding author. Email: ketd@hnue.edu.vn (T.D.Ke)}
\maketitle
\numberwithin{equation}{section}
\newtheorem{theorem}{Theorem}[section]
\newtheorem{lemma}[theorem]{Lemma}
\newtheorem{proposition}[theorem]{Proposition}
\newtheorem{corollary}[theorem]{Corollary}
\newtheorem{definition}{Definition}[section]
\newtheorem{remark}{Remark}[section]

\begin{abstract}
We deal with a class of semilinear nonlocal differential equations in Hilbert spaces which is a general model for some anomalous diffusion equations. By using the theory of integral equations with completely positive kernel together with local estimates, some existence, regularity and stability results are established. An application to nonlocal partial differential equations is shown to demonstrate our abstract results. 
\end{abstract}

\section{Introduction}
Let $H$ be a separable Hilbert space. Consider the following problem
\begin{align}
\frac{d}{dt}[k*(u-u_0)](t) + Au(t) & = f(u(t)), \; t>0,\label{e1}\\
u(0) & = u_0,\label{e2}
\end{align}
where the unknown function $u$ takes values in $H$, the kernel $k\in L^1_{loc}(\mathbb R^+)$, $A$ is an unbounded linear operator, and $f:H\to H$ is a given function. Here $*$ denotes the Laplace convolution, i.e., $(k*v)(t) = \int_0^t k(t-s)v(s)ds$. 

It should be mentioned that, nonlocal equations have been employed to model different problems related to processes in materials with memory (see, e.g., \cite{CN1981,Grip1985,JW2004,Pruss}). In particular, when the kernel $k(t)=g_{1-\alpha}(t):={t^{-\alpha}}/{\Gamma(1-\alpha)}, \alpha\in (0,1)$, equation \eqref{e1} is in the form of fractional differential equations as the term $\dfrac{d}{dt}[k*(u-u_0)]$ represents the Caputo fractional derivative of order $\alpha$, and this equation has been a subject of an extensive study. In a specific setting, for example, when $H=L^2(\Omega), \Omega\subset\mathbb R^N$, and $A=-\Delta$ is the Laplace operator associated with a boundary condition of Dirichlet/Neumann type, equation \eqref{e1} with a class of kernel functions is utilized to describe anomalous diffusion phenomena including slow/ultraslow diffusions, which were remarked in \cite{VZ}. 

Our motivation for the present work is that, up to our knowledge, no attempt has been made to establish regularity results for \eqref{e1}-\eqref{e2}. Moreover, the stability analysis in the sense of Lyapunov for \eqref{e1} has been less known. In the special case when $k=g_{1-\alpha}$, we refer to some results on stability analysis given in \cite{AK,KL,KL2}. In the recent paper \cite{VZ2017}, Vergara and Zacher investigated a concrete model of type \eqref{e1}, which is a nonlocal semilinear partial differential equation (PDE). Using a maximum principle for the linearized equation, they proved the asymptotic stability for zero solution of this equation. It is worth noting that, the technique used in \cite{VZ2017} does not work for the abstract equation \eqref{e1}. In this paper, the stability of solutions to \eqref{e1} will be analyzed by using a new representation of solutions together with a new Gronwall type inequality. In order to deal with \eqref{e1}, we make the following standing hypotheses.

\begin{itemize}
\it
\item[(\textsf{A})] The operator $A:D(A)\to H$ is densely defined, self-adjoint, and positively definite with compact resolvent.
\item[(\textsf{K})] The kernel function $k\in L^1_{loc}(\mathbb R^+)$ is nonnegative and nonincreasing, and there exists a function $l\in L^1_{loc}(\mathbb R^+)$ such that $k*l=1$ on $(0,\infty)$. 
\item[(\textsf{F})] The nonlinear function $f:H\to H$ is locally Lipschitzian, i.e., for each $\rho>0$ there is a nonnegative number $\kappa (\rho)$ such that
$$
\|f(v_1)-f(v_2)\|\le \kappa(\rho) \|v_1-v_2\|, \;\forall v_1, v_2\in B_\rho,
$$
where $B_\rho$ is the closed ball in $H$ with center at origin and radius $\rho$.
\end{itemize}

Noting that, the hypothesis (\textsf K) was used in a lot of works, e.g. \cite{Koc,KSVZ,SC,VZ,VZ2017,Zacher}. This enables us to transform equations of type \eqref{e1} to a Volterra integral equation with completely positive kernel, which is a main subject discussed in \cite{Pruss}. In this case, one writes $(k,l) \in \mathcal {PC}$. Some typical examples of $(k,l)$ were given in \cite{VZ}, e.g.,
\begin{itemize}
\item $k(t) = g_{1-\alpha}(t)$ and $l(t) = g_\alpha(t), t>0$: slow diffusion (fractional order) case.
\item $\displaystyle k(t) = \int_0^1 g_{\beta}(t)d\beta$ and $\displaystyle l(t) = \int_0^\infty\frac{e^{-pt}}{1+p}dp, t>0$: ultra-slow diffusion (distributed order) case.
\item $\displaystyle k(t) = g_{1-\alpha}(t)e^{-\gamma t}, \gamma\ge 0$, and $\displaystyle l(t) = g_\alpha(t)e^{-\gamma t} + \gamma\int_0^t g_\alpha(s)e^{-\gamma s}ds, t>0$: tempered fractional order case.
\end{itemize}
For more examples on (\textsf K), we refer the reader to \cite{SC}.

Owing these hypotheses, we are able to derive, in the next section, a variation-of-parameter formula as well as the concept of mild solution for inhomogeneous equations. We show that a mild solution is also a weak solution, and it is classical if the external force function is H\"older continuous and the kernel function $l$ is smooth enough. Section 3 is devoted to the semilinear equations, in which we prove the local/global solvability and asymptotic stability for \eqref{e1}. In addition, we show that, the mild solution of semilinear problem is also H\"older continuous. Consequently, we present in the last section an application of the abstract results.

\section{Preliminaries}

For $\mu\in \mathbb R^+$, consider the following scalar integral equations
\begin{align}
& s(t) + \mu (l*s)(t) =1, \; t\ge 0,\label{eq-s}\\
& r(t) + \mu (l*r)(t) = l(t), \; t>0.\label{eq-r}
\end{align}
In the sequel, we assume, in addition, that $l$ is continuous on $(0,\infty)$. Under this assumption, the existence and uniqueness of $s$ and $r$ were examined in \cite{Miller}. In the case $l(t)=g_\alpha(t)$, following from the Laplace transform of $s(\cdot)$ and $r(\cdot)$, we know that $s(t)=E_{\alpha,1}(-\mu t^\alpha)$ and $r(t)= t^{\alpha-1}E_{\alpha,\alpha}(-\mu t^\alpha)$, here $E_{\alpha,\beta}$ is the Mittag-Leffler function defined by
\begin{align*}
E_{\alpha,\beta}(z)=\sum_{n=0}^\infty\frac{z^n}{\Gamma(\alpha n+\beta)},\; z\in \mathbb C.
\end{align*} 
Recall that the kernel function $l$ is said to be completely positive iff $s(\cdot)$ and $r(\cdot)$ take nonnegative values for every $\mu>0$. The complete positivity of $l$ is equivalent to that (see \cite{CN1981}), there exist $\alpha\ge 0$ and $k\in L^1_{loc}(\mathbb R^+)$ nonnegative and nonincreasing which satisfy $\alpha l+ l*k=1$. In particular, the hypothesis (\textsf K) ensures that $l$ is completely positive.

Denote by $s(\cdot,\mu)$ and $r(\cdot,\mu)$ the solutions of \eqref{eq-s} and \eqref{eq-r}, respectively. We collect some properties of these functions.

\begin{proposition}\label{pp-sr}
Let the hypothesis (\textsf K) hold. Then for every $\mu>0$, $s(\cdot,\mu),r(\cdot,\mu)\in L^1_{loc}(\mathbb R^+)$. In addition, we have:
\begin{enumerate}
\item The function $s(\cdot,\mu)$ is nonnegative and nonincreasing. Moreover, 
\begin{equation}\label{eq-s1}
s(t,\mu)\left[1+\mu\int_0^t l(\tau)d\tau\right]\le 1, \;\forall t\ge 0.
\end{equation}
\item The function $r(\cdot, \mu)$ is nonnegative and the following relations hold
\begin{align}
& s(t,\mu) =1-\mu \int_0^t r(\tau,\mu)d\tau= k*r(\cdot,\mu)(t), \; t\ge 0.\label{eq-sr1}
\end{align}
\item For each $t>0$, the functions $\mu\mapsto s(t,\mu)$ and $\mu\mapsto r(t,\mu)$ are nonincreasing.
\end{enumerate}
\end{proposition}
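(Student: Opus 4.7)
The plan is to settle the three items in the order (2), (1), (3), since (1) leans on the representation in (2) and (3) is a distinct computation. To begin, the nonnegativity of $s(\cdot,\mu)$ and $r(\cdot,\mu)$ is built into the hypotheses: the text recalls that (\textsf{K}) forces $l$ to be completely positive, which by definition is the assertion $s,r\ge 0$ for every $\mu>0$. Local integrability of $s$ is automatic from $s+\mu l*s=1$ together with $s,l\ge 0$ (which yields $0\le s\le 1$), while $r\in L^1_{loc}(\mathbb R^+)$ is part of Miller's existence statement cited just before the proposition.

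Next I would prove (2). Convolving $r+\mu l*r=l$ by $k$ and using $k*l=1$ rewrites the equation as
\[
k*r+\mu\,l*(k*r)=k*l=1,
\]
so by uniqueness for \eqref{eq-s}, $k*r=s$. The same convolution by $k$ (applied directly to $r+\mu l*r=l$) also gives $k*r+\mu\int_0^t r(\tau,\mu)\,d\tau=1$, whence $s(t,\mu)=1-\mu\int_0^t r(\tau,\mu)\,d\tau$. Claim (1) then follows at once: this second representation shows $s$ is nonincreasing (being $1$ minus the nondecreasing function $\mu\int_0^t r(\tau,\mu)\,d\tau$); and for the bound \eqref{eq-s1}, the monotonicity of $s$ yields $(l*s)(t)\ge s(t)\int_0^t l(\tau)\,d\tau$, which substituted into $s+\mu l*s=1$ gives $s(t)[1+\mu\int_0^t l(\tau)\,d\tau]\le 1$.

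For (3), I would turn the differences into resolvent equations. With $s_i:=s(\cdot,\mu_i)$, $r_i:=r(\cdot,\mu_i)$ and $\mu_1<\mu_2$, subtracting the defining equations produces
\[
(s_1-s_2)+\mu_2\,l*(s_1-s_2)=(\mu_2-\mu_1)\,l*s_1,
\]
and an identical identity for $r_1-r_2$ with $s_1$ replaced by $r_1$ on the right-hand side. The operator $I+\mu_2 l*$ can be inverted explicitly: a direct substitution shows that $u+\mu_2 l*u=g$ implies $u=g-\mu_2\,r_2*g$, because $r_2+\mu_2 l*r_2=l$ provides the needed cancellation. Plugging this in and applying the rearrangement $l-\mu_2 l*r_2=r_2$ once more, the right-hand sides collapse to
\[
s_1-s_2=(\mu_2-\mu_1)\,r_2*s_1,\qquad r_1-r_2=(\mu_2-\mu_1)\,r_2*r_1,
\]
both of which are nonnegative by the already-proved claims. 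The main technical worry throughout is justifying these convolution manipulations in the $L^1_{loc}$ setting with $l$ merely continuous on $(0,\infty)$; however, this is standard once Miller's uniqueness theory (already invoked for existence) is available, so the whole argument reduces to the algebraic juggling sketched above.
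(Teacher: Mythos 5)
Your argument is correct, but it is organized quite differently from the paper's. For items (1) and (2) the paper simply cites \cite{CN1981} for \eqref{eq-s1} and \eqref{eq-sr1}, whereas you rederive them: convolving \eqref{eq-r} with $k$ and invoking $k*l=1$ plus Miller's uniqueness to get $k*r=s$ and $s=1-\mu\, 1*r$, then extracting \eqref{eq-s1} from the monotonicity bound $(l*s)(t)\ge s(t)\int_0^t l$. That is a clean, self-contained substitute for the citation (you do still import nonnegativity of $s$ and $r$ from the complete-positivity characterization in \cite{CN1981}, exactly as the paper does, so the reliance on that reference is not fully eliminated). The real divergence is in item (3): the paper works on the Laplace transform side, computing $\partial_\mu\hat s=-\hat s\hat r$ and $\partial_\mu\hat r=-\hat r\hat r$ and inverting to get $\partial_\mu s=-(s*r)\le 0$, $\partial_\mu r=-(r*r)\le 0$; you instead derive the time-domain finite-difference identities
\[
s(\cdot,\mu_1)-s(\cdot,\mu_2)=(\mu_2-\mu_1)\,r(\cdot,\mu_2)*s(\cdot,\mu_1),\qquad
r(\cdot,\mu_1)-r(\cdot,\mu_2)=(\mu_2-\mu_1)\,r(\cdot,\mu_2)*r(\cdot,\mu_1),
\]
by inverting $I+\mu_2 l*$ through the resolvent $r_2$. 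These are precisely the integrated versions of the paper's derivative formulas, so the two proofs are morally the same computation; but your version has a genuine advantage in rigor, since the Laplace-transform route tacitly requires $l$ to be transformable (subexponential growth), which is not part of (\textsf{K}) and only appears later in (\textsf{K*}), and it also sidesteps justifying the differentiation under the inverse transform. The convolution algebra you worry about (associativity, commutativity, and the inversion identity $u=g-\mu_2 r_2*g$) is standard in $L^1_{loc}$ via Fubini and poses no obstacle.
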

\begin{proof}
The justification for \eqref{eq-s1} and \eqref{eq-sr1} can be found in \cite{CN1981}. We prove the last statement. For $\beta\in L^1_{loc}(\mathbb R^+)$, we denote by $\hat\beta$ the Laplace transform of $\beta$. It follows from \eqref{eq-s}-\eqref{eq-r} that
\begin{align*}
\hat s(\lambda,\mu) & = \lambda^{-1}(1+\mu\hat l)^{-1},\\
\hat r(\lambda,\mu) & = \hat l (1+\mu\hat l)^{-1}, \; \lambda>0.
\end{align*}
Then
\begin{align*}
\frac{\partial}{\partial\mu}\hat s(\lambda,\mu) & = - \lambda^{-1}\hat l (1+\mu\hat l)^{-2} = - \hat s(\lambda,\mu) \hat r(\lambda,\mu),\\
\frac{\partial}{\partial\mu}\hat r(\lambda,\mu) & = - \hat l^2 (1+\mu\hat l)^{-2}=-\hat r(\lambda,\mu)\hat r(\lambda,\mu).
\end{align*}
Taking the inverse transform and using the convolution rule, we get
\begin{align*}
\frac{\partial}{\partial\mu}s(t,\mu) & = - s(\cdot,\mu)*r(\cdot,\mu)(t)\le 0,\\
\frac{\partial}{\partial\mu}r(t,\mu) & = - r(\cdot,\mu)*r(\cdot,\mu)(t)\le 0, \; t>0.
\end{align*}
The proof is complete.
\end{proof}
\begin{remark}\label{rm-sr}
\begin{enumerate}
\item As mentioned in \cite{VZ2017}, the functions $s(\cdot,\mu)$ and $r(\cdot,\mu)$ take nonnegative values even in the case $\mu\le 0$. 
\item Equation \eqref{eq-s} is equivalent to the problem
\begin{align*}
\frac{d}{dt}[k*(s-1)] + \mu s &= 0,\;  s(0) =1.
\end{align*}
This can be seen by convoluting both side of equation $[s-1] + \mu l*s =0$ with $k$ and using $k*l=1$. 
\item Let $v(t) = s(t,\mu)v_0 + (r(\cdot,\mu)*g)(t)$, here $g\in L^1_{loc}(\mathbb R^+)$. Then $v$ solves the problem
\begin{align*}
\frac{d}{dt}[k*(v-v_0)](t) + \mu v(t) & = g(t),\; v(0)  = v_0.
\end{align*} 
Indeed, by formulation and the relation $k*r=s$, we have 
\begin{align*}
k*(v-v_0) & =  k*(s-1)v_0 + k*r*g\\
& = k*(s-1)v_0 + s*g.
\end{align*}
So
\begin{align*}
\frac{d}{dt}[k*(v-v_0)] & = \frac{d}{dt}[k*(s-1)]v_0 + s(0,\mu)g + s'(\cdot,\mu)*g\\
& = -\mu  s(\cdot,\mu) v_0 + g -\mu r(\cdot,\mu)*g\\
& = -\mu [s(\cdot,\mu)v_0 + r(\cdot,\mu)*g] + g\\
& = -\mu v + g,
\end{align*}
thanks to the fact that $s(0,\mu)=1$ and $s'(t,\mu) = -\mu r(t,\mu), \; t>0$.
\item We deduce from \eqref{eq-s1} that, if $l\not\in L^1(\mathbb R^+)$ then $\lim\limits_{t\to\infty}s(t,\mu)=0$ for every $\mu>0$. 
\item It follows from \eqref{eq-sr1} that $\int_0^t r(\tau,\mu)d\tau \le \mu^{-1}, \;\forall t>0$. If $l\not\in L^1(\mathbb R^+)$ then $\int_0^\infty r(\tau,\mu)d\tau = \mu^{-1}$ for every $\mu>0$.
\end{enumerate}
\end{remark}
We are now in a position to prove a Gronwall type inequality, which play an important role in our analysis.
\begin{proposition}\label{pp-gronwall}
Let $v$ be a nonnegative function satisfying
\begin{equation}
v(t) \le s(t,\mu)v_0 + \int_0^t r(t-\tau, \mu) [\alpha v(\tau) + \beta(\tau)]d\tau, \; t\ge 0,\label{pp-gronwall-0}
\end{equation}
for $\mu>0, v_0\ge 0, \alpha>0$ and $\beta\in L^1_{loc}(\mathbb R^+)$. Then
$$
v(t) \le s(t, \mu-\alpha)v_0 + \int_0^t r(t-\tau,\mu-\alpha)\beta(\tau)d\tau.
$$
Particularly, if $\beta$ is constant then
$$
v(t) \le s(t, \mu-\alpha)v_0 +\frac{\beta}{\mu-\alpha}(1-s(t,\mu-\alpha)).
$$
\end{proposition}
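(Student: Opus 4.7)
The plan is to exploit the differential-equation interpretation of the integral representation given in Remark 2.1(3): the formula $w = s(\cdot,\mu')v_0 + r(\cdot,\mu')*g$ and the nonlocal equation $\frac{d}{dt}[k*(w-v_0)] + \mu' w = g$ with $w(0)=v_0$ are equivalent, both describing the unique $L^1_{loc}$ solution of the linear Volterra equation $w + \mu' l*w = v_0 + l*g$ (obtained by convolving with $l$ and using $k*l=1$). The strategy is to introduce the right-hand side of \eqref{pp-gronwall-0} as an auxiliary function $w$, read off the nonlocal equation it satisfies, absorb the term $\alpha v$ into the coefficient of $w$ through $v = w - e$, and then translate back into an integral representation with parameter $\mu-\alpha$.

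Concretely, first I would set
$$w(t) := s(t,\mu)v_0 + \int_0^t r(t-\tau,\mu)\bigl[\alpha v(\tau)+\beta(\tau)\bigr]d\tau,\qquad e(t):=w(t)-v(t)\ge 0.$$
By Remark 2.1(3), $w$ solves $\frac{d}{dt}[k*(w-v_0)] + \mu w = \alpha v + \beta$ with $w(0)=v_0$. Writing $\alpha v = \alpha w - \alpha e$ turns this into
$$\frac{d}{dt}\bigl[k*(w-v_0)\bigr] + (\mu-\alpha)\,w = \beta - \alpha e,\qquad w(0)=v_0.$$

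Applying the converse direction of Remark 2.1(3) (with $\mu$ replaced by $\mu-\alpha$) yields
$$w(t) = s(t,\mu-\alpha)v_0 + \int_0^t r(t-\tau,\mu-\alpha)\bigl[\beta(\tau) - \alpha e(\tau)\bigr]d\tau.$$
Since $r(\cdot,\mu-\alpha)\ge 0$---by Proposition 2.1(2) when $\mu>\alpha$, and by Remark 2.2(1) when $\mu\le\alpha$---and $\alpha e \ge 0$, the term $-\alpha \int_0^t r(t-\tau,\mu-\alpha)e(\tau)d\tau$ is nonpositive and may be dropped, giving $v(t) \le w(t) \le s(t,\mu-\alpha)v_0 + \int_0^t r(t-\tau,\mu-\alpha)\beta(\tau)d\tau$. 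The constant-$\beta$ assertion then follows from the identity $\int_0^t r(\tau,\mu-\alpha)d\tau = (1 - s(t,\mu-\alpha))/(\mu-\alpha)$ in \eqref{eq-sr1}.

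The main delicate point is the bookkeeping around the equivalence invoked twice in the argument. One must interpret $\frac{d}{dt}[k*(w-v_0)]$ correctly and use $k*l=1$ together with uniqueness for the linear Volterra equation with $L^1_{loc}$ kernel $l$ to justify moving freely between the integral and nonlocal-differential forms. Once this equivalence is in hand, the proof is purely algebraic: shifting the linear term $\alpha v$ from the source into the coefficient of $w$ decreases the parameter from $\mu$ to $\mu-\alpha$, while the slack $e \ge 0$ contributes only a favorable sign in the final estimate.
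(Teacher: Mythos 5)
Your argument is essentially identical to the paper's own proof: the paper also introduces $w$ as the right-hand side of \eqref{pp-gronwall-0}, uses Remark \ref{rm-sr}(3) to read off the nonlocal equation for $w$, rewrites it with coefficient $\mu-\alpha$ and source $\alpha(v-w)+\beta$ (your $\beta-\alpha e$), and drops the nonpositive term using the nonnegativity of $r$. Your explicit remark that nonnegativity of $r(\cdot,\mu-\alpha)$ must be invoked from Remark \ref{rm-sr}(1) when $\mu\le\alpha$ is a small point the paper leaves implicit, but otherwise the two proofs coincide.
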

\begin{proof}
Let $w(t)$ be the expression in the right hand side of \eqref{pp-gronwall-0}. Then $v(t)\le w(t)$ for $t\ge 0$, and $w$ solves the problem
\begin{align*}
\frac{d}{dt}[k*(w-v_0)](t) + \mu w(t) & = \alpha v(t) + \beta(t),\\
w(0) & = v_0,
\end{align*}
thanks to Remark \ref{rm-sr} (2). This is equivalent to 
\begin{align*}
\frac{d}{dt}[k*(w-v_0)](t) + (\mu -\alpha) w(t) & = \alpha (v(t)-w(t)) + \beta(t),\\
w(0) & = v_0,
\end{align*}
which implies
\begin{align*}
w(t) & = s(t,\mu-\alpha)v_0 + \int_0^t r(t-\tau,\mu-\alpha)[\alpha (v(\tau)-w(\tau)) + \beta(\tau)]d\tau\\
& \le s(t,\mu-\alpha)v_0 + \int_0^t r(t-\tau,\mu-\alpha)\beta(\tau) d\tau,
\end{align*}
in accordance with $v(\tau)-w(\tau)\le 0$ for $\tau\ge 0$ and the positivity of $r$. 

Finally, if $\beta$ is constant, we employ \eqref{eq-sr1} to get 
$$
\int_0^t r(t-\tau,\mu-\alpha)\beta d\tau = \beta \int_0^t r(\tau,\mu-\alpha) d\tau=\frac{\beta}{\mu-\alpha}(1-s(t,\mu-\alpha)),
$$
which completes the proof.
\end{proof}
Let us mention that, the hypothesis (\textsf A) ensures the existence of an orthonormal basis of $H$ consisting of eigenfunctions $\{e_n\}_{n=1}^\infty$ of the operator $A$ and we have
$$
Av = \sum_{n=1}^\infty \lambda_n v_n e_n,
$$
where $\lambda_n>0$ is the eigenvalue corresponding to the eigenfunction $e_n$ of $A$,
$$
D(A) = \{v = \sum_{n=1}^\infty v_ne_n: \sum_{n=1}^\infty \lambda_n^2v_n^2 <\infty\}.
$$
We can assume that $0<\lambda_1\le \lambda_2\le ... \le \lambda_n \to \infty$ as $n\to\infty$.

For $\gamma\in\mathbb R$, one can define the fractional power of $A$ as follows
\begin{align*}
D(A^\gamma) &= \left\{v = \sum_{n=1}^\infty v_ne_n: \sum_{n=1}^\infty \lambda_n^{2\gamma}v_n^2<\infty \right\},\\
A^\gamma v &= \sum\limits_{n=1}^\infty \lambda_n^\gamma v_n e_n.
\end{align*}
Let $V_\gamma = D(A^\gamma)$. Then $V_\gamma$ is a Banach space endowed with the norm
$$
\|v\|_\gamma = \|A^\gamma v\| = \left(\sum_{n=1}^\infty \lambda_n^{2\gamma}v_n^2\right)^{\frac 12}.
$$
Furthermore, for $\gamma>0$, we can identify the dual space $V_\gamma^*$ of $V_\gamma$ with $V_{-\gamma}$.

We now define the following operators
\begin{align}
S(t)v &= \sum_{n=1}^\infty s(t,\lambda_n) v_n e_n,\; t\ge 0, v\in H,\label{op-S}\\
R(t)v &= \sum_{n=1}^\infty r(t,\lambda_n) v_n e_n,\; t>0, v\in H.\label{op-R}
\end{align}
It is easily seen that $S(t)$ and $R(t)$ are linear. We show some basic properties of these operators in the following lemma.
\begin{lemma}\label{lm-SR}
Let $\{S(t)\}_{t\ge 0}$ and $\{R(t)\}_{t > 0},$ be the families of linear operators defined by \eqref{op-S} and \eqref{op-R}, respectively. Then
\begin{enumerate}
\item For each $v\in H$ and $T>0$, $S(\cdot)v\in C([0,T];H)$ and $AS(\cdot)v\in C((0,T];H)$. Moreover,
\begin{align}
& \|S(t)v\|\le s(t,\lambda_1)\|v\|,\; t\in  [0,T], \label{lm-SR1}\\  
& \|AS(t)v\| \le \frac{\|v\|}{(1*l)(t)},\; t\in (0,T]. \label{lm-SR2}
\end{align}
\item Let $v\in H, T>0$ and $g\in C([0,T];H)$. Then $R(\cdot)v\in C((0,T];H)$, $R*g\in C([0,T];H)$ and $A(R*g) \in C([0,T];V_{-\frac{1}{2}})$. Furthermore,
\begin{align}
& \|R(t)v\| \le r(t,\lambda_1)\|v\|, \; t\in (0,T],\label{lm-SR3a}\\
& \|(R*g)(t)\| \le \int_0^t r(t-\tau,\lambda_1) \|g(\tau)\| d\tau, \; t\in [0,T],\label{lm-SR3b}\\
& \|A(R*g)(t)\|_{-\frac 12}\le  \left(\int_0^t r(t-\tau,\lambda_1) \|g(\tau)\|^2 d\tau\right)^{\frac 12},\; t\in [0,T].\label{lm-SR3c}
\end{align}
\end{enumerate}
\end{lemma}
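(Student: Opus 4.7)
My plan is to exploit the spectral decomposition $A = \sum_{n} \lambda_n \langle \cdot, e_n\rangle e_n$ provided by hypothesis (\textsf{A}), so that every estimate reduces via Parseval to scalar facts about $s(\cdot,\lambda_n)$ and $r(\cdot,\lambda_n)$ collected in Proposition~\ref{pp-sr} and Remark~\ref{rm-sr}. The pointwise bounds \eqref{lm-SR1} and \eqref{lm-SR3a} fall out at once from the monotonicity assertion of Proposition~\ref{pp-sr}(3): since $\lambda_n \ge \lambda_1$, one has $s(t,\lambda_n)\le s(t,\lambda_1)$ and $r(t,\lambda_n)\le r(t,\lambda_1)$, and then one factors the constant out of the Parseval sum. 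For \eqref{lm-SR2} I will rewrite \eqref{eq-s1} as $\mu\, s(t,\mu) \le ((1*l)(t))^{-1}$, valid for every $\mu>0$ and $t$ with $(1*l)(t)>0$, and then $\|AS(t)v\|^2 = \sum_n \lambda_n^2 s(t,\lambda_n)^2 v_n^2 \le ((1*l)(t))^{-2}\|v\|^2$.

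For \eqref{lm-SR3b}, the $n$-th Fourier coefficient of $(R*g)(t)$ equals $\int_0^t r(t-\tau,\lambda_n)g_n(\tau)d\tau$, and I will combine the triangle inequality inside the integral with the monotonicity in $\mu$ and Minkowski's integral inequality to pass the $H$-norm under the integral sign. Estimate \eqref{lm-SR3c} is the most delicate of the pointwise bounds; I will apply Cauchy--Schwarz in the scalar convolution, treating $r(t-\tau,\lambda_n)\,d\tau$ as a positive measure, and use $\int_0^t r(\tau,\lambda_n)d\tau \le \lambda_n^{-1}$ from Remark~\ref{rm-sr}(5) to obtain
\[
\lambda_n\Bigl(\int_0^t r(t-\tau,\lambda_n)g_n(\tau)\,d\tau\Bigr)^{2} \le \int_0^t r(t-\tau,\lambda_n)g_n(\tau)^{2}\,d\tau.
\]
Summing in $n$, bounding $r(\cdot,\lambda_n)$ by $r(\cdot,\lambda_1)$, and interchanging sum and integral then yields \eqref{lm-SR3c}.

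The continuity statements will be proved by the standard truncation argument. The partial sums $S_N(t)v$, $R_N(t)v$ and $(R_N*g)(t)$ are finite combinations whose continuity in $t$ follows from the continuity of the scalars $s(\cdot,\mu)$ and $r(\cdot,\mu)$; the remainders are controlled uniformly in $t$ on compact sub-intervals by the pointwise bounds already established, using dominated convergence on the Parseval tail. Thus $S(\cdot)v\in C([0,T];H)$ directly, while $AS(\cdot)v$ is continuous on each $[\delta,T]$ because $((1*l)(t))^{-1}$ is bounded there, and $R(\cdot)v\in C((0,T];H)$ because $r(\cdot,\lambda_1)$ is locally bounded on $(0,T]$. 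Continuity of $R*g$ on $[0,T]$ follows by the usual split of the convolution into $[0,t]$ and $[t,t+h]$ pieces, using the $L^1_{loc}$ integrability of $r(\cdot,\lambda_1)$; continuity at $t=0$ is forced by \eqref{lm-SR3b} and $\int_0^t r(\tau,\lambda_1)d\tau \to 0$ as $t\to 0^+$.

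The step I expect to require the most care is continuity of $A(R*g)$ into $V_{-1/2}$ up to $t=0$: the operator $AR(t)$ is too singular as $t\to 0^+$ to be treated in $H$, so the whole argument has to be carried out at the level of \eqref{lm-SR3c}. My plan is to bound $\|A(R*g)(t+h) - A(R*g)(t)\|_{-1/2}^{2}$ by an integral of $|r(t+h-\tau,\lambda_1)-r(t-\tau,\lambda_1)|\,\|g(\tau)\|^{2}$ plus a small-interval remainder, and then invoke $L^1$-continuity of translations for $r(\cdot,\lambda_1)$ together with the uniform boundedness of $\|g\|$ on $[0,T]$; the vanishing at $t=0$ again follows from \eqref{lm-SR3c} and the integrability of $r(\cdot,\lambda_1)$ near zero.
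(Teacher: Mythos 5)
Your proposal is correct and follows essentially the same route as the paper: spectral decomposition plus Parseval, the scalar bounds $s(t,\lambda_n)\le s(t,\lambda_1)$, $\lambda_n s(t,\lambda_n)\le ((1*l)(t))^{-1}$, $r(t,\lambda_n)\le r(t,\lambda_1)$, the Cauchy--Schwarz step with $r(t-\tau,\lambda_n)\,d\tau$ as a positive measure combined with $\int_0^t r(\tau,\lambda_n)d\tau\le\lambda_n^{-1}$ for \eqref{lm-SR3c}, and uniform convergence of the tails (via Dini-type uniform convergence of $\sum_n|g_n(\tau)|^2$) for the continuity claims. The only cosmetic difference is that for continuity of $A(R*g)$ up to $t=0$ the paper deduces it directly from the uniform convergence of the series of continuous coefficients, whereas you invoke $L^1$-continuity of translations of $r(\cdot,\lambda_1)$; both are valid.
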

\begin{proof}
(1) For the first statement, we observe that
\begin{align}\label{lm-SR4}
\|S(t)v\|^2 = \sum_{n=1}^\infty s^2(t,\lambda_n)v_n^2.
\end{align}
Since $s(t,\lambda_n)\le 1$ for every $t\ge 0, n\in\mathbb N$, this series is uniformly convergent on $[0,T]$. So is series \eqref{op-S}. Due to the fact that $s(\cdot, \lambda_n)$ is continuous, we get $S(\cdot)v\in C([0,T];H)$. Estimate \eqref{lm-SR1} is deduced from \eqref{lm-SR4} by using $s(t,\lambda_n)\le s(t,\lambda_1)$ for all $n>1$.

Considering
\begin{equation}\label{lm-SR5}
AS(t)v = \sum_{n=1}^\infty \lambda_n s(t,\lambda_n) v_n e_n,
\end{equation}
we have
\begin{equation}\label{lm-SR5a}
\|AS(t)v\|^2 = \sum_{n=1}^\infty \lambda^2_n s^2(t,\lambda_n) v^2_n.
\end{equation}
In view of \eqref{eq-s1}, we get 
$$
\lambda_n s(t,\lambda_n)  \le \frac{\lambda_n}{1+\lambda_n (1*l)(t)} \le \frac{1}{(1*l)(t)}, \forall t>0.
$$
Substituting into \eqref{lm-SR5a}, we have $\|AS(t)v\| \le \dfrac{\|v\|}{(1*l)(t)}$, for every $t>0$. In addition, for any $\delta$ such that $0<\delta<T$, one has $\lambda_n s(t,\lambda_n)\le \dfrac{1}{(1*l)(\delta)}$, which implies that the convergence of \eqref{lm-SR5a} as well as \eqref{lm-SR5} is uniform on $[\delta, T]$. That is, $AS(\cdot)v\in C([\delta, T];H)$. 

(2) Recall that $r(\cdot, \mu)$ is continuous on $(0,\infty)$ (see, e.g. \cite{Miller}). So for any $\delta \in (0, T)$ and $\mu>0$, $r(\cdot,\mu) \in C([\delta, T])$. This ensures that the series
\begin{equation}\label{lm-SR6}
\|R(t)v\| = \sum_{n=1}^\infty r^2(t,\lambda_n)v_n^2
\end{equation}
is uniformly convergent on $[\delta, T]$. So is series \eqref{op-R}. Inequality \eqref{lm-SR3a} follows from \eqref{lm-SR6} since $r(t,\cdot)$ is nonincreasing. 

We now prove that $R*g\in C([0,T];H)$. Denoting $g_n(t) = (g(t), e_n)$, we first check that
\begin{equation}\label{lm-SR7}
(R*g)(t) = \sum_{n=1}^\infty [r(\cdot,\lambda_n)*g_n] (t) e_n.
\end{equation}
Indeed, since $g\in C([0, T];H)$, the series $\|g(t)\| = \sum\limits_{n=1}^\infty |g_n(t)|^2$ is uniformly convergent on $[\delta, T]$. Given $\epsilon>0$, for $\delta \le \tau\le t\le T$ and $p\in\mathbb N$, we have
\begin{align*}
\|\sum_{k=n}^{n+p} r(\tau,\lambda_k)g_k(t-\tau) e_k\| & \le r(\tau,\lambda_1)\left(\sum_{k=n}^{n+p}|g_k(t-\tau)|^2\right)^{\frac 12}<\epsilon,
\end{align*}
provided that $n$ is large enough. So the series $\sum\limits_{n=1}^{\infty} r(\tau,\lambda_n)g_n(t-\tau) e_n$ converges uniformly on $[\delta, T]$ and one can take integration term by term on $[\delta, t]$, i.e. 
$$
\int_\delta^t R(t-\tau)g(\tau)d\tau = \sum_{n=1}^\infty\left( \int_\delta^t r(\tau,\lambda_n)g_n(t-\tau)d\tau \right)e_n.
$$
Fix $t>0$ and put $h_n(\delta) = \int\limits_\delta^t r(\tau,\lambda_n)g_n(t-\tau)d\tau$. Arguing as above for the uniform convergence of the series $\sum\limits_{n=1}^\infty h_n(\delta)e_n$ on $[0,t]$, we can pass to the limit as $\delta\to 0$ to get \eqref{lm-SR7}. Taking \eqref{lm-SR7} into account, by the H\"older inequality, one has
\begin{align}
|[r(\cdot,\lambda_n)*g_n] (t)| & \le \int_0^t \sqrt{r(t-\tau,\lambda_n)}\sqrt{r(t-\tau,\lambda_n)}|g_n(\tau)|d\tau\notag\\
& \le \left(\int_0^t r(t-\tau,\lambda_n)d\tau\right)^{\frac 12}\left(\int_0^t r(t-\tau,\lambda_n) |g_n(\tau)|^2d\tau\right)^{\frac 12}\notag\\
& \le \left(\frac{1}{\lambda_n}(1-s(t,\lambda_n))\right)^{\frac 12}\left(\int_0^t r(t-\tau,\lambda_1) |g_n(\tau)|^2 d\tau\right)^{\frac 12}\notag\\
& \le \frac{1}{\lambda_n^{\frac 12}}\left(\int_0^t r(t-\tau,\lambda_1) |g_n(\tau)|^2d\tau\right)^{\frac 12},\label{lm-SR8}
\end{align}
thanks to \eqref{eq-sr1} and the monotonicity of $r(t,\cdot)$. Then it follows
\begin{align*}
\sum_{k=n}^{n+p}|[r(\cdot,\lambda_k)*g_k] (t)|^2 & \le \frac{1}{\lambda_1} \int_0^t r(t-\tau,\lambda_1)\left(\sum_{k=n}^{n+p}|g_k(\tau)|^2\right)d\tau\\
& \le \frac{\epsilon}{\lambda_1} \int_0^t r(t-\tau,\lambda_1)d\tau\le \frac{\epsilon}{\lambda^2_1},
\end{align*}
for $n$ large, thanks to the uniform convergence of $\sum\limits_{n=1}^\infty |g_n(t)|^2$ on $[0,T]$ and relation \eqref{eq-sr1}. Hence \eqref{lm-SR7} is uniformly convergent on $[0,T]$ and then $R*g\in C([0,T];H)$. Estimate \eqref{lm-SR3b} takes place by employing \eqref{lm-SR3a}.

Finally, we show that $A(R*g) \in C([0,T];V_{-\frac{1}{2}})$. Noticing that 
\begin{align*}
A(R*g)(t) = \sum_{n=1}^\infty \lambda_n [r(\cdot,\lambda_n)*g_n] (t) e_n,
\end{align*}
we obtain
\begin{equation}\label{lm-SR9}
\|A(R*g)(t)\|^2_{-\frac 12} = \|A^{\frac 12}(R*g)(t)\|^2= \sum_{n=1}^\infty \left(\lambda^{\frac{1}{2}}_n [r(\cdot,\lambda_n)*g_n] (t)\right)^2.
\end{equation}
Using estimate \eqref{lm-SR8}, one can claim the uniform convergence of \eqref{lm-SR9} on $[0, T]$ and estimate \eqref{lm-SR3c} follows. Thus $A(R*g)\in C([0,T];V_{-\frac 12})$ as desired.

The proof is complete.
\end{proof}
\begin{remark}\label{rm-SR}
\begin{enumerate}
\item Obviously, $S(0)v=v$ for every $v\in H$.
\item We have $(R*g)(0) = 0$. Indeed, it follows from \eqref{lm-SR3b} that
\begin{align*}
\|(R*g)(t)\| & \le  \sup_{\tau\in [0,T]}\|g(\tau)\|\int_0^t r(t-\tau,\lambda_1)d\tau\\
& =  \sup_{\tau\in [0,T]}\|g(\tau)\|\lambda_1^{-1} (1-s(t,\lambda_1))\to 0\text{ as } t\to 0.
\end{align*}
\item Lemma \ref{lm-SR} implies that $A^{\frac 12}S(\cdot)v, A^{\frac 12} (R*g) \in C((0,T];H)$ for every $v\in H$ and $g\in C([0,T];H)$. Equivalently, $S(\cdot)v, R*g \in C((0,T];V_{\frac 12})$.
\end{enumerate}
\end{remark}

Given $g\in C([0,T];H)$, consider the linear problem
\begin{align}
\frac{d}{dt}[k*(u-u_0)](t) + Au(t) & = g(t), t\in (0,T],\label{le1}\\
u(0) & = u_0.\label{le2}
\end{align}
Based on the operators $S(t)$ and $R(t)$, we introduce the following definition of mild solutions to \eqref{le1}-\eqref{le2}.
\begin{definition}\label{def-mild-sol}
A function $u\in C([0,T];H)$ is called a mild solution to the problem \eqref{le1}-\eqref{le2} on $[0,T]$ iff
\begin{equation}\label{def-mild-sol-0}
u(t) = S(t) u_0 + \int_0^t R(t-s)g(s)ds,\; t\in [0,T].
\end{equation}
\end{definition}
\section{Weak solution and regularity}
\subsection{Existence and uniqueness}
In the sequel, we will define weak solution for \eqref{le1}-\eqref{le2} and show that a mild solution is also a weak solution.
\begin{definition}
A function $u\in C([0,T];H)\cap C((0,T]; V_{\frac 12})$ is said to be a weak solution to \eqref{le1}-\eqref{le2} on $[0,T]$ iff $u(0)=u_0$ and equation \eqref{le1} holds in $V_{-\frac 12}$.
\end{definition}
\begin{theorem}\label{th-wsol}
If $u$ is a mild solution to the problem \eqref{le1}-\eqref{le2}, then it is a weak solution. 
\end{theorem}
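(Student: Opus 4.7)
The plan is to project the mild solution formula onto the eigenbasis $\{e_n\}$ of $A$, verify the equation mode by mode via Remark \ref{rm-sr}(3), and then reassemble the pieces by pairing with an arbitrary test function $\phi \in V_{1/2}$. The regularity and initial value come for free: from $u(t) = S(t)u_0 + (R*g)(t)$, Lemma \ref{lm-SR} gives $u \in C([0,T];H)$, Remark \ref{rm-SR}(3) upgrades this to $u \in C((0,T]; V_{1/2})$, and $S(0)=I$ together with $(R*g)(0)=0$ (Remark \ref{rm-SR}(1)--(2)) yields $u(0)=u_0$.

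For the equation, write $u_n(t) = s(t,\lambda_n) u_{0n} + (r(\cdot,\lambda_n)*g_n)(t)$ with $u_{0n}=(u_0,e_n)$ and $g_n(t)=(g(t),e_n)$. Remark \ref{rm-sr}(3) applied with $\mu=\lambda_n$ and $v_0=u_{0n}$ gives, for each $n$, the scalar identity
$$
\frac{d}{dt}[k*(u_n - u_{0n})](t) + \lambda_n u_n(t) = g_n(t), \quad t>0.
$$
To lift this to an equation in $V_{-1/2}$, I would fix $\phi = \sum_n \phi_n e_n \in V_{1/2}$ and verify
$$
\frac{d}{dt}\bigl\langle k*(u-u_0)(t),\phi\bigr\rangle + \langle Au(t),\phi\rangle = (g(t),\phi),
$$
after observing that expanding $u-u_0$ in the basis and commuting the $H$-inner product with the Laplace convolution gives $\langle k*(u-u_0)(t),\phi\rangle = \sum_n [k*(u_n-u_{0n})](t)\phi_n$. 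Note $Au(t) \in V_{-1/2}$ since $u(t) \in V_{1/2}$, and $g(t) \in H \hookrightarrow V_{-1/2}$, so the target identity is a legitimate equality in $V_{-1/2}$.

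The main obstacle is precisely the termwise differentiation in $t$ of this scalar series. I would handle it by showing that, on every subinterval $[\delta, T]\subset(0,T]$, the formal derivative series $\sum_n \bigl(g_n(t)-\lambda_n u_n(t)\bigr)\phi_n$ converges uniformly. Split its tail using Cauchy--Schwarz:
$$
\Bigl|\sum_{n\ge N} g_n(t)\phi_n\Bigr|\le \sup_{[0,T]}\|g\|\,\Bigl(\sum_{n\ge N}\phi_n^2\Bigr)^{\!1/2},\quad
\Bigl|\sum_{n\ge N}\lambda_n u_n(t)\phi_n\Bigr|\le \sup_{[\delta,T]}\|u\|_{1/2}\,\Bigl(\sum_{n\ge N}\lambda_n\phi_n^2\Bigr)^{\!1/2},
$$
both of which are uniform in $t$ and vanish as $N\to\infty$ thanks to $\phi\in V_{1/2}$ and $u\in C((0,T];V_{1/2})$. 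Together with the uniform convergence of $\sum_n [k*(u_n-u_{0n})](t)\phi_n$ on $[0,T]$ (a straightforward $H$-Cauchy--Schwarz bound inserting $\sqrt{k(t-s)}$), this legitimizes termwise differentiation. Summing the scalar identity against $\phi_n$ then produces the weak equation in $V_{-1/2}$, as required.
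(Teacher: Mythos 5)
Your proposal is correct and follows essentially the same route as the paper: expand in the eigenbasis of $A$, use the scalar identity from Remark \ref{rm-sr}(3) mode by mode, and justify termwise differentiation of $k*(u-u_0)$ by uniform convergence of the formal derivative series on $[\delta,T]$. The only (cosmetic) difference is that you pair with a fixed $\phi\in V_{\frac 12}$ and prove scalar uniform convergence via Cauchy--Schwarz, whereas the paper proves norm convergence of $-AS(t)u_0-A(R*g)(t)+g(t)$ in $V_{-\frac 12}$ directly from Lemma \ref{lm-SR}; since the candidate derivative $g-Au$ is continuous into $V_{-\frac 12}$, the tested version upgrades to the norm statement at once.
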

\begin{proof}
Let $u$ be defined by \eqref{def-mild-sol-0}. Then Lemma \ref{lm-SR} ensures that $S(\cdot)u_0$ and $R*g$ belong to $C([0,T];H)$, so $u = S(\cdot)u_0 + R*g \in C([0,T];H)$. By Remark \ref{rm-SR}, we get $u(0) = u_0$ and $u\in C((0,T];V_{\frac 12})$.

By formulation, we have 
\begin{align*}
k(\tau)(u(t-\tau)-u_0) & = \sum_{n=1}^\infty k(\tau)[s(t-\tau,\lambda_n) - 1] u_{0n} e_n \\
& + \sum_{n=1}^\infty k(\tau)[r(\cdot,\lambda_n)*g_n](t-\tau) e_n\\
\end{align*}
for $\delta\le \tau\le t\le T$, where $\delta\in (0,T)$, and these series are uniformly convergent on $[\delta, t]$. So one has
\begin{align}
\int_\delta^t k(\tau)(u(t-\tau)-u_0) d\tau & = \sum_{n=1}^\infty \int_\delta^t k(\tau)[s(t-\tau,\lambda_m) - 1] d\tau u_{0n} e_n \notag \\
& + \sum_{n=1}^\infty \int_\delta^t k(\tau)[r(\cdot,\lambda_n)*g_n](t-\tau)d\tau e_n.\label{th-wsol1}
\end{align}
For fixed $t\in (0,T]$, put
$$
h_n(\delta) = \int_\delta^t k(\tau)[s(t-\tau,\lambda_m) - 1] d\tau\,u_{0n} + \int_\delta^t k(\tau)[r(\cdot,\lambda_n)*g_n](t-\tau)d\tau.
$$
Obviously, $h_n$ is continuous on $[0,t]$ for all $n$, and the function $\delta\mapsto h(\delta) = \int_\delta^t k(\tau)(u(t-\tau)-u_0) d\tau$ is also continuous on $[0,t]$. Then the series $\sum_{n=1}^\infty h_n(\delta) e_n$ converges uniformly on $[0,t]$, which enables us to pass to the limit in \eqref{th-wsol1} to obtain
\begin{align}\label{th-wsol2}
k*(u-u_0)(t) & = \sum_{n=1}^\infty k*(s(\cdot,\lambda_n)-1)(t)u_{0n} e_n + \sum_{n=1}^\infty k*[r(\cdot,\lambda_n)*g_n](t) e_n\notag\\
& = \sum_{n=1}^\infty k*(s(\cdot,\lambda_n)-1)(t)u_{0n} e_n + \sum_{n=1}^\infty [s(\cdot,\lambda_n)*g_n](t) e_n,
\end{align}
thanks to \eqref{eq-sr1}. We testify that, it is possible to take differentiation term by term in \eqref{th-wsol2}. It suffices to prove that the series
\begin{equation}\label{th-wsol3}
\sum_{n=1}^\infty \frac{d}{dt}[k*(s(\cdot,\lambda_n)-1)](t)u_{0n} e_n + \sum_{n=1}^\infty \frac d{dt} [s(\cdot,\lambda_n)*g_n](t) e_n
\end{equation}
is uniformly convergent on $[\delta, T]$ for any $\delta\in (0,T)$. Indeed, by Remark \ref{rm-sr} we have
\begin{align*}
\frac{d}{dt}[k*(s(\cdot,\lambda_n)-1)](t) & = -\lambda_n s(t,\lambda_n),\\
\frac d{dt} [s(\cdot,\lambda_n)*g_n](t) & = g_n(t) - \lambda_n [r(\cdot,\lambda_n)*g](t).
\end{align*}
Therefore, \eqref{th-wsol3} becomes
\begin{align*}
& - \sum_{n=1}^\infty \lambda_n s(t,\lambda_n) u_{0n} e_n -\sum_{n=1}^\infty\lambda_n [r(\cdot,\lambda_n)*g](t)e_n + \sum_{n=1}^\infty g_n (t)e_n \\
& = -AS(t)u_0 - A(R*g)(t) + g(t),
\end{align*}
which are uniformly convergent on $[\delta,T]$ as shown in Lemma \ref{lm-SR}. Hence, we can take differentiation in \eqref{th-wsol2} and get the equation
$$
\frac{d}{dt}[k*(u-u_0)](t) = -AS(t)u_0 - A(R*g)(t) + g(t) = -Au(t) + g(t), t\in (0,T],
$$
which holds in $V_{-\frac 12}$. The proof is complete.
\end{proof}
We are in a position to prove the uniqueness of weak solution.
\begin{theorem}\label{th-uniq}
Problem \eqref{e1}-\eqref{e2} has a unique weak solution.
\end{theorem}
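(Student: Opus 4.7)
The plan is to establish uniqueness of weak solutions by a difference-and-eigenfunction argument; existence of a weak solution is already in hand, since Theorem \ref{th-wsol} says the mild solution from Definition \ref{def-mild-sol} is a weak solution. (I read the reference \eqref{e1}-\eqref{e2} in the statement as the linear problem \eqref{le1}-\eqref{le2}, which is the only one for which ``weak solution'' has been defined so far.)

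Let $u_1, u_2$ be two weak solutions and set $w = u_1 - u_2$. Then $w \in C([0,T];H) \cap C((0,T]; V_{\frac 12})$, $w(0) = 0$, and $w$ satisfies
\[
\frac{d}{dt}[k*w](t) + Aw(t) = 0 \quad\text{in } V_{-\frac 12}.
\]
For each $n$, put $w_n(t) = (w(t), e_n)$. Since $e_n$ lies in every $V_\gamma$, pairing the above equation with $e_n$ in the $V_{-\frac 12}/V_{\frac 12}$ duality and using $A e_n = \lambda_n e_n$ yields the scalar problem
\[
\frac{d}{dt}[k*w_n](t) + \lambda_n w_n(t) = 0, \qquad w_n(0) = 0.
\]

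I would then integrate on $[0,t]$ (noting $[k*w_n](0)=0$) and convolve with $l$, using $l*k = 1$ on $(0,\infty)$, to obtain the linear Volterra integral equation
\[
W_n(t) + \lambda_n (l*W_n)(t) = 0, \qquad W_n(t) = \int_0^t w_n(s)\,ds.
\]
Standard Volterra resolvent theory (cf.\ \cite{Miller}) forces $W_n \equiv 0$, hence $w_n \equiv 0$ for every $n$; since $\{e_n\}$ is an orthonormal basis of $H$, $w \equiv 0$ and thus $u_1 = u_2$.

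The main obstacle is the rigorous projection step: the equation only holds in $V_{-\frac 12}$, so pairing with $e_n$ and subsequently integrating and convolving with $l$ require term-by-term manipulations analogous to those already carried out in the proof of Theorem \ref{th-wsol}, leaning on $w \in C((0,T]; V_{\frac 12})$ together with $e_n \in V_\gamma$ for every $\gamma$. A convenient alternative to invoking Miller is to iterate the Volterra inequality $|W_n(t)| \le \lambda_n \int_0^t l(t-s)|W_n(s)|\,ds$ (or apply a suitable variant of Proposition \ref{pp-gronwall}) to conclude $W_n \equiv 0$ directly.
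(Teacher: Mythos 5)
Your proof is correct, but it takes a genuinely different route from the paper's. You diagonalize: project the homogeneous equation for $w=u_1-u_2$ onto each eigenfunction $e_n$ (legitimate, since $e_n\in V_{\frac 12}$ and the equation holds in $V_{-\frac 12}$), integrate from $0$ using $[k*w_n](0^+)=0$, convolve with $l$ via $k*l=1$, and reduce everything to the homogeneous scalar Volterra equation $W_n+\lambda_n(l*W_n)=0$. For the last step I would avoid the naive iteration (since $\lambda_n\|l\|_{L^1(0,T)}$ need not be small) and also note that Proposition \ref{pp-gronwall} as stated assumes $\mu>\alpha>0$; the clean argument is to convolve with the resolvent $r(\cdot,\lambda_n)$ of \eqref{eq-r}, use $\lambda_n\,l*r=l-r$ to get $l*W_n=0$, and then $W_n=-\lambda_n l*W_n=0$. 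The paper instead runs an energy argument: it mollifies the kernel with $h_n=n\,r(\cdot,n)$, so that $k_n=k*h_n=n\,s(\cdot,n)\in W^{1,1}(0,T)$, invokes the fundamental identity of Vergara--Zacher to obtain $\left(v(t),(k_n*v)'(t)\right)\ge \frac 12 (k_n*\|v(\cdot)\|^2)'(t)$, tests the regularized equation with $v(t)$ itself, and passes to the limit to conclude $\frac 12\|v(t)\|^2\le -\,l*\|A^{\frac 12}v(\cdot)\|^2(t)\le 0$. Your spectral route is more elementary and self-contained, relying only on hypothesis (\textsf A) and the scalar resolvent theory already set up in Section 2; the paper's energy method is heavier but needs no eigenbasis and so would carry over to variational settings where $A$ is not diagonalizable. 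Finally, your reading of the statement is consistent with what the paper actually proves: its displayed difference equation has zero right-hand side, so it too is establishing uniqueness for the linear problem \eqref{le1}--\eqref{le2} (uniqueness for the semilinear problem is handled separately in Section 4 via the Gronwall inequality).
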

\begin{proof}
It remains to show the uniqueness. Let $h_\mu=-s'_\mu=\mu r$, then $h_\mu$ is nonnegative and solves the equation
$$
h_\mu (t) + \mu(h_\mu*l)(t) = \mu l(t), \; t>0, \mu>0.
$$
In addition, for $1\le p<\infty$, $f\in L^p(0,T)$, one has $h_n*f\to f$ in $L^p(0,T)$ as $n\to \infty$ (\cite{Zacher2}). Put $k_\mu = k*h_\mu$, then $k_\mu = \mu k*r=\mu s_\mu$, thanks to \eqref{eq-sr1}. Hence $k_\mu\in W^{1,1}(0,T)$. This enable us to employ the fundamental identity (\cite[Lemma 2.3]{VZ})
\begin{align*}
\left(v(t), (k_\mu*v)'(t)\right) & = \frac 12 (k_\mu*\|v(\cdot)\|^2)'(t) + \frac 12 k_\mu(t)\|v(t)\|^2\\
& + \frac 12 \int_0^t \|v(t)-v(t-s)\|^2[-k'_\mu(s)]ds,\; t\in [0,T], v\in C([0,T];H).
\end{align*}
Therefore
\begin{equation}\label{th-uniq1}
\left(v(t), (k_\mu*v)'(t)\right) \ge \frac 12 (k_\mu*\|v(\cdot)\|^2)'(t), \; t\in [0,T], v\in C([0,T];H),
\end{equation}
thanks to the fact that $k_\mu$ is nonincreasing. 

Let $u_1$ and $u_2$ be weak solutions of \eqref{e1}-\eqref{e2}. Put $v=u_2-u_2$, then we have
\begin{align*}
((k*v)'(t),w) +(Av(t), w) & = 0,\;\forall t\in (0,T],\; w\in V_{\frac 12},\\
v(0) & = 0.
\end{align*}
Then
\begin{align*}
((h_n*k*v)(t),w) +(h_n*1*Av(t), w) & = 0,\;\forall t\in (0,T],\; w\in V_{\frac 12}.
\end{align*}
which is equivalent to
\begin{align*}
((k_n*v)'(t),w) +(h_n*Av(t), w) & = 0,\;\forall t\in (0,T],\; w\in V_{\frac 12}.
\end{align*}
Taking $w=v(t)$ and using \eqref{th-uniq1} yields
\begin{align*}
\frac 12 (k_n*\|v(\cdot)\|^2)'(t) +(h_n*Av(t), v(t)) & \le 0,\;\forall t\in (0,T].
\end{align*}
Let $g(t) = \frac 12 (k_n*\|v(\cdot)\|^2)'(t) +(h_n*Av(t), v(t))$, then $g(t)\le 0, \;\forall t\in (0,T]$. Noting that, the relation
$$
\frac 12 (k_n*\|v(\cdot)\|^2)'(t) = g_n(t):= g(t) - (h_n*Av(t), v(t))
$$
is equivalent to (see \cite[Lemma 2.4]{VZ})
\begin{equation}\label{th-uniq2}
\frac 12 \|v(t)\|^2 = \frac 1n g_n(t) + l*g_n(t), t\in (0,T],
\end{equation}
and the fact that $g_n(t)\to g(t) - (Av(t), v(t))$ as $n\to\infty$, for $t\in (0,T]$, we obtain
$$
\frac 12 \|v(t)\|^2 = l*[g(\cdot)-\|A^{\frac 12}v(\cdot)\|^2](t)\le 0, \; t\in (0,T].
$$
Thus $v=0$ and the proof is complete.
\end{proof}
\subsection{Regularity}
By using (\textsf K), the problem \eqref{le1}-\eqref{le2} can be transformed to the integral equation
\begin{equation*}
u(t) + l*Au(t) = u_0 + l*g(t),\; t\in [0,T].
\end{equation*}
This allows us to employ the resolvent theory in \cite{Pruss} for regularity analysis. Noting that the solution operator for the equation
\begin{equation}\label{eq-base}
u(t) + l*Au(t) = u_0, \; t\in [0,T],
\end{equation}
is given by $S(t)u_0 = u(t)$, where $S(t)$ is defined by \eqref{op-S}. We refer to $S(\cdot)$ as the resolvent family. 

We recall some notions and fact stated in \cite{Pruss}.
\begin{definition}
Let $l\in L^1_{loc}(\mathbb R^+)$ be a function of subexponential growth, i.e. $\displaystyle \int_0^\infty |l(t)|e^{-\epsilon t}dt<\infty$ for every $\epsilon>0$. 
\begin{itemize}
\item $l$ is said to be of positive type if $\displaystyle \text{\rm Re}\int_0^T (l*\varphi)(t)\bar\varphi(t)dt\ge 0$ for each $\varphi\in C(\mathbb R^+;\mathbb C)$ and $T>0$.
\item For given $m\in\mathbb N$, $l$ is called $m$-regular if there exists a constant $c>0$ such that
$$
|\lambda^n \hat l^{(n)}(\lambda)| \le c|\hat l(\lambda)|\;\;\text{ for all \rm Re}\lambda>0, 0\le n\le m,
$$
here $\hat l$ is the Laplace transform of $l$.
\end{itemize}
\end{definition}
It is easily seen that, if $l$ is nonnegative and nonincreasing on $(0,\infty)$, then $l$ is of positive type. Indeed, let $\varphi(t) = p(t) + iq(t)$, then
\begin{align*}
\text{\rm Re}\int_0^T (l*\varphi)(t)\bar\varphi(t)dt & = \int_0^T dt\int_0^t l(t-s)[p(s)p(t) + q(s)q(t)]ds\\
& \ge l(T) \int_0^T [P(t)p(t) + Q(t)q(t)]dt\\
& = \frac 12 l(T) [P^2(T) + Q^2(T)],
\end{align*}
where $P=1*p$ and $Q=1*q$, the primitive of $p$ and $q$, respectively.
\begin{definition}
Equation \eqref{eq-base} is called parabolic if the following conditions hold:
\begin{enumerate}
\item $\hat l(\lambda)\neq 0$, $1/\hat l(\lambda)\in\rho (-A)$ for all $\text{\rm Re}\lambda\ge 0$.
\item There is a constant $M\ge 1$ such that $U(\lambda) = \lambda^{-1}(I+\hat l(\lambda)A)^{-1}$ satisfies
$$
\|U(\lambda)\| \le \frac{M}{|\lambda|}\;\;\text{ for all \rm Re}\lambda>0.
$$
\end{enumerate}
\end{definition}
We have the following sufficient condition for \eqref{eq-base} to be parabolic.
\begin{proposition}{\cite[Corollary 3.1]{Pruss}}\label{pp-parabolic}
Assume that $l\in L^1_{loc}(\mathbb R^+)$ is of subexponential growth and of positive type. If $-A$ generates a bounded analytic semigroup in $H$, then \eqref{eq-base} is parabolic.
\end{proposition}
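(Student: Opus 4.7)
The plan is to verify the two defining conditions of parabolicity for equation \eqref{eq-base}. Since $l$ is of subexponential growth, the Laplace transform $\hat l(\lambda)$ is well-defined and holomorphic on the open right half-plane $\{\text{\rm Re}\lambda > 0\}$. The positive-type hypothesis is the algebraic ingredient that forces $\hat l(\lambda)$ into a favorable half-plane, while the assumption that $-A$ generates a bounded analytic semigroup provides the required sectorial resolvent estimate. The whole argument then rests on matching the sector containing $1/\hat l(\lambda)$ with the (larger) sector of analyticity of $-A$.

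First I would establish that $\hat l(\lambda)\ne 0$ and $|\arg \hat l(\lambda)|\le \pi/2$ for all $\text{\rm Re}\lambda>0$. The inequality $\text{\rm Re}\,\hat l(\lambda)\ge 0$ on the right half-plane is a standard consequence of the positive-type property: apply the defining inequality to functions approximating $e^{-\lambda t}\mathbf 1_{[0,T]}$ and pass $T\to\infty$, exploiting subexponential growth. Nonvanishing of $\hat l$ on the right half-plane is then forced, since a zero at $\lambda_0$ with $\text{\rm Re}\lambda_0>0$ would, by holomorphy and the open mapping theorem, produce points in every neighborhood where $\text{\rm Re}\,\hat l<0$. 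Consequently $\mu:=1/\hat l(\lambda)$ also lies in the closed right half-plane.

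Next I would invoke the characterization of generators of bounded analytic semigroups: there exist $\phi>0$ and $M\ge 1$ such that the open sector $\Sigma_{\pi/2+\phi}:=\{\mu\ne 0:|\arg\mu|<\pi/2+\phi\}$ is contained in $\rho(-A)$ and $\|\mu(\mu I+A)^{-1}\|\le M$ on $\Sigma_{\pi/2+\phi}$. Since $|\arg(1/\hat l(\lambda))|\le \pi/2<\pi/2+\phi$, this yields $1/\hat l(\lambda)\in\rho(-A)$, proving condition (1) on the open right half-plane. For condition (2) I factor
\begin{equation*}
U(\lambda)=\lambda^{-1}(I+\hat l(\lambda)A)^{-1}=\lambda^{-1}\hat l(\lambda)^{-1}\bigl(\hat l(\lambda)^{-1}I+A\bigr)^{-1},
\end{equation*}
and apply the above resolvent bound with $\mu=\hat l(\lambda)^{-1}$ to obtain $\|(\hat l(\lambda)^{-1}I+A)^{-1}\|\le M|\hat l(\lambda)|$, from which $\|U(\lambda)\|\le M/|\lambda|$ follows at once.

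The main obstacle I anticipate is promoting condition (1) from the open right half-plane to its closure $\text{\rm Re}\lambda\ge 0$ required by the definition, given that $\hat l$ is a priori only meaningful for $\text{\rm Re}\lambda>0$ under mere subexponential growth. This should be handled by a continuity/limit argument exploiting the strict sector inclusion $\pi/2<\pi/2+\phi$: the resolvent estimates and the nonvanishing of $\hat l$ persist up to the boundary away from $\lambda=0$, while at $\lambda=0$ one uses that $0\in\rho(-A)$ thanks to $\lambda_1>0$. The remaining steps are routine algebraic manipulations with resolvents.
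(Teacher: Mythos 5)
This proposition is not proved in the paper at all: it is imported verbatim as \cite[Corollary 3.1]{Pruss}, so there is no in-paper argument to compare against. Your reconstruction is essentially the standard proof from Pr\"uss's book and is correct in all its main steps: subexponential growth gives holomorphy of $\hat l$ on $\{\operatorname{Re}\lambda>0\}$; positive type is equivalent (for such kernels) to $\operatorname{Re}\hat l(\lambda)\ge 0$ there; the open mapping theorem then rules out zeros of $\hat l$ (modulo the degenerate case $l=0$, excluded here since $k*l=1$); and the sectoriality estimate $\|\mu(\mu I+A)^{-1}\|\le M$ on a sector of angle $\pi/2+\phi$, valid because $-A$ generates a bounded analytic semigroup, combines with $|\arg(1/\hat l(\lambda))|\le\pi/2$ and your factorization of $U(\lambda)$ to give both conditions of parabolicity. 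One remark on the ``obstacle'' you flag at the end: the boundary case $\operatorname{Re}\lambda=0$ is a red herring. In Pr\"uss's Definition 3.1 both conditions of parabolicity are required only for $\operatorname{Re}\lambda>0$; the ``$\ge$'' in condition (1) of the paper's transcription appears to be a typo, and indeed the statement with ``$\ge$'' cannot hold in general since $\hat l$ need not extend to the imaginary axis under mere subexponential growth (e.g.\ $l\equiv 1$). So your continuity argument up to the boundary is unnecessary, and attempting it in full generality would fail; your proof of the open half-plane version is complete as it stands.
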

Let us mention that, by the assumption (\textsf A), $-A$ generates a contraction $C_0$-semigroup in $H$, which is given by
$$
e^{-tA}v = \sum_{n=1}^\infty e^{-t\lambda_n}(v,e_n) e_n,\; v\in H.
$$
So the semigroup $\{e^{-tA}\}_{t\ge 0}$ is analytic due to \cite[Corollary 7.1.1]{Vrabie}.

The following result on the resolvent family for \eqref{eq-base} plays an important role in our analysis.
\begin{proposition}{\cite[Theorem 3.1]{Pruss}}\label{pp-reg}
Assume that \eqref{eq-base} is parabolic and the kernel function $l$ is $m$-regular for some $m\ge 1$. Then there is a resolvent family $S(\cdot)\in C^{(m-1)}((0,\infty);\mathcal L(H))$ for \eqref{eq-base}, and a constant $M\ge 1$ such that
$$
\|t^nS^{(n)}(t)\| \le M,\;\text{ for all } t> 0, n\le m-1.
$$
\end{proposition}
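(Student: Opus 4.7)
The plan is to construct the resolvent family via the complex inversion formula for the Laplace transform. Since parabolicity guarantees that $U(\lambda) = \lambda^{-1}(I+\hat l(\lambda)A)^{-1}$ exists and satisfies $\|U(\lambda)\| \le M/|\lambda|$ on $\{\text{\rm Re}\,\lambda > 0\}$, I take as candidate
\[
S(t) = \frac{1}{2\pi i}\int_\Gamma e^{\lambda t}\, U(\lambda)\, d\lambda,\quad t>0,
\]
where $\Gamma$ is a Hankel-type contour wrapping around the origin and the spectrum of $-A$. The first step is to extend $U$ analytically from the right half-plane to a sector $\Sigma_\theta = \{\lambda\ne 0: |\arg\lambda| < \pi/2 + \theta\}$ for some $\theta\in (0,\pi/2)$, while preserving the bound $\|U(\lambda)\| \le \tilde M/|\lambda|$. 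This uses $1$-regularity: the estimate $|\lambda\hat l'(\lambda)|\le c|\hat l(\lambda)|$ controls how much $\hat l(\lambda)$ moves when $\lambda$ rotates slightly into the left half-plane, and combined with boundedness of $\hat l(\lambda)A\,U(\lambda)$ already established on the imaginary axis, a Neumann series argument shows $I+\hat l(\lambda)A$ remains invertible on $\Sigma_\theta$.

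Next I would derive the derivative estimates $\|U^{(n)}(\lambda)\| \le C_n/|\lambda|^{n+1}$ on $\Sigma_\theta$ for $0\le n\le m$. Repeated differentiation of the identity
\[
\frac{d}{d\lambda}(I+\hat l(\lambda)A)^{-1} = -\hat l'(\lambda)\,A(I+\hat l(\lambda)A)^{-2}
\]
together with the Leibniz rule expresses $\lambda^n U^{(n)}(\lambda)$ as a finite sum of products of factors of the form $\lambda^k\hat l^{(k)}(\lambda)/\hat l(\lambda)$ (bounded via $m$-regularity) and $\hat l(\lambda)A(I+\hat l(\lambda)A)^{-1}$ (bounded via parabolicity), times a single factor $U(\lambda)$. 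Differentiation under the integral sign then yields
\[
S^{(n)}(t) = \frac{1}{2\pi i}\int_\Gamma \lambda^n e^{\lambda t}\, U(\lambda)\, d\lambda,
\]
and after $n$ integrations by parts (equivalently, Cauchy's formula shifting derivatives onto $U$) the integrand decays like $|\lambda|^{-1-n}$ along $\Gamma$, ensuring absolute convergence and continuity of $S^{(n)}$ on $(0,\infty)$ by dominated convergence.

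To obtain the sharp bound $\|t^n S^{(n)}(t)\| \le M$ I would rescale the contour so that its radial part sits at distance of order $1/t$ from the origin; substituting $\lambda = \mu/t$ and invoking the sectorial estimates on $U^{(n)}$ reduces the integral to a $t$-independent absolutely convergent one, producing the claimed uniform bound. That $S(t)u_0$ indeed solves \eqref{eq-base} is checked on the Laplace transform side: by construction $\widehat S(\lambda) = U(\lambda)$, which is exactly the transformed resolvent equation. The main obstacle I expect is precisely the first step, the sectorial extension of $U$. Transferring the parabolic bound from the closed right half-plane to a genuine sector is delicate because one must simultaneously exploit the quantitative control of $\hat l$ from $m$-regularity and the already-established operator bound on $iA\cdot U(i\tau)$; once this sectorial estimate is in hand, the remaining steps, namely the derivative bounds, contour deformation, and scaling, are essentially routine sectorial-operator arguments.
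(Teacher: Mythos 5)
First, note that the paper does not prove this proposition at all: it is quoted verbatim from \cite[Theorem 3.1]{Pruss}, so the only ``proof'' on record is that citation. Measured against the actual argument in Pr\"uss's monograph, your proposal contains a genuine gap, and it sits exactly at the step you yourself flag as the main obstacle: the analytic extension of $U(\lambda)=\lambda^{-1}(I+\hat l(\lambda)A)^{-1}$ to a sector $\Sigma_\theta=\{\lambda\neq 0:|\arg\lambda|<\pi/2+\theta\}$. This extension is not available under the stated hypotheses. The function $\hat l$ is the Laplace transform of a kernel that is merely $L^1_{loc}$ of subexponential growth, so it is analytic only on the open right half-plane and need not continue across the imaginary axis at all; $m$-regularity is an estimate valid on $\{\mathrm{Re}\,\lambda>0\}$, not a continuation device, and parabolicity is by definition a bound on the closed right half-plane only. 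If the sectorial extension with $\|U(\lambda)\|\le \tilde M/|\lambda|$ were available, one would obtain an \emph{analytic} resolvent (Pr\"uss's Theorem 2.1, which requires genuinely stronger hypotheses); the whole point of the ``parabolic $+$ $m$-regular'' framework is to produce $C^{m-1}$ regularity with $\|t^nS^{(n)}(t)\|\le M$ \emph{without} leaving the right half-plane. Consequently the Hankel contour wrapping around the origin and the spectrum of $-A$ cannot be used here.

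The correct argument keeps everything inside $\{\mathrm{Re}\,\lambda>0\}$. One first combines parabolicity with $m$-regularity and the Leibniz rule --- this part of your sketch is right --- to obtain $\|U^{(n)}(\lambda)\|\le C|\lambda|^{-(n+1)}$ for $\mathrm{Re}\,\lambda>0$ and $0\le n\le m$, and then represents $t^nS^{(n)}(t)$ by the inversion integral over the \emph{vertical line} $\mathrm{Re}\,\lambda=1/t$, made absolutely convergent by $n+1$ integrations by parts: the derivatives land on $\lambda^nU(\lambda)$ and produce $O(|\lambda|^{-2})$ decay, which is integrable on a vertical line, while $|e^{\lambda t}|$ contributes only the constant $e$ there (on a vertical line the exponential supplies no decay, unlike on a Hankel contour). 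This is also where your derivative count slips: $n$ integrations by parts leave an integrand of size $|\lambda|^{-1}$, which is not absolutely integrable, and the need for $n+1$ derivatives of $U$ is precisely why the conclusion is restricted to $n\le m-1$ rather than $n\le m$. The uniform bound $\|t^nS^{(n)}(t)\|\le M$ then comes from the choice of abscissa $c=1/t$ rather than from rescaling a sectorial contour.
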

In order to obtain the differentiability of the resolvent family, we replace (\textsf K) by a stronger assumption.
\begin{itemize}
\it
\item[(\textsf K*)] The assumption (\textsf K) is satisfied with $l$ being 2-regular, nonincreasing and of subexponential growth.
\end{itemize}
\begin{remark}\label{rm-K}
As mentioned in \cite{CN1981}, the assumption (\textsf K) does not guarantee that $l$ is nonincreasing. However if $k$ is positive, decreasing, log-convex ($\ln k$ is a convex function), and $k(0+)=\infty$, then $l$ is nonincreasing.
\end{remark}
Employing Proposition \ref{pp-reg}, we have the following statement.
\begin{lemma}\label{lm-reg}
Let (\textsf A) and (\textsf K*) hold. Then the resolvent family $S(\cdot)$ defined by \eqref{op-S} is differentiable on $(0,\infty)$, the relation
\begin{equation}\label{lm-reg1} 
S'(t) =- AR(t),\; t\in (0,\infty),
\end{equation}
and the estimate
\begin{equation}\label{lm-reg2}
\|S'(t)\| \le \frac{M}{t},\; t\in (0,\infty),
\end{equation}
hold for some $M\ge 1$.
\end{lemma}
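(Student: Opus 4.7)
The plan is to reduce the statement directly to the resolvent theory quoted from Pr\"uss, namely Propositions \ref{pp-parabolic} and \ref{pp-reg}, and then read off the identity \eqref{lm-reg1} from the spectral representation \eqref{op-S}. First I would verify the hypotheses of these two propositions under (\textsf{A}) and (\textsf{K*}). The operator $-A$ generates a bounded analytic $C_0$-semigroup, as noted just after Proposition \ref{pp-parabolic}. The kernel $l$ has subexponential growth by (\textsf{K*}), and since it is nonnegative and nonincreasing, the computation following the definition of positive-type kernels shows that $l$ is of positive type. Proposition \ref{pp-parabolic} then yields that \eqref{eq-base} is parabolic. Combined with the $2$-regularity of $l$ assumed in (\textsf{K*}), Proposition \ref{pp-reg} (with $m=2$) provides a resolvent family $\widetilde S(\cdot)\in C^{(1)}((0,\infty);\mathcal L(H))$ for \eqref{eq-base} satisfying $\|t\widetilde S'(t)\|\le M$ on $(0,\infty)$.

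Next I would identify $\widetilde S(\cdot)$ with the family $S(\cdot)$ given by \eqref{op-S}. For every $v=\sum_n v_ne_n\in H$, the series definition together with equation \eqref{eq-s} for each $\mu=\lambda_n$ yields $S(t)v+l*AS(t)v=v$ in $H$ (the convergence in $V_{\frac12}$ needed to apply $l*A$ is precisely what Lemma \ref{lm-SR} already provides). Uniqueness of the resolvent family for the parabolic equation \eqref{eq-base} then forces $S(\cdot)=\widetilde S(\cdot)$, so $S\in C^1((0,\infty);\mathcal L(H))$ and \eqref{lm-reg2} follows immediately from the Pr\"uss bound.

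It remains to establish \eqref{lm-reg1}. Fix $t>0$ and $v=\sum_n v_ne_n\in H$. Since $S(\cdot)v$ is differentiable in $H$, every Fourier coefficient $t\mapsto (S(t)v,e_n)=s(t,\lambda_n)v_n$ is differentiable with $(S'(t)v,e_n)=\tfrac{d}{dt}s(t,\lambda_n)v_n$. The scalar identity $s'(t,\mu)=-\mu r(t,\mu)$ recorded in Remark \ref{rm-sr}(3) then gives $(S'(t)v,e_n)=-\lambda_n r(t,\lambda_n)v_n$. In particular, $\{\lambda_n r(t,\lambda_n)v_n\}$ is square-summable because it is the sequence of Fourier coefficients of $-S'(t)v\in H$, which means $R(t)v\in D(A)$ and $AR(t)v=\sum_n\lambda_n r(t,\lambda_n)v_ne_n=-S'(t)v$. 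This is exactly \eqref{lm-reg1}.

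The step I expect to require the most care is the identification $S=\widetilde S$: Proposition \ref{pp-reg} furnishes a resolvent abstractly, but to transfer its regularity to the concrete series \eqref{op-S} one must ensure that both objects solve \eqref{eq-base} and appeal to uniqueness. Once this identification is in hand, the differentiability and the bound \eqref{lm-reg2} are automatic, and the identity \eqref{lm-reg1} reduces to the coordinate-wise formula $s'(\,\cdot\,,\mu)=-\mu r(\,\cdot\,,\mu)$, so no delicate termwise differentiation argument is actually needed.
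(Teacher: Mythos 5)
Your proposal is correct and follows essentially the same route as the paper: verify that (\textsf{A}) and (\textsf{K*}) put you in the scope of Propositions \ref{pp-parabolic} and \ref{pp-reg} to get differentiability of the resolvent and the bound \eqref{lm-reg2}, then obtain \eqref{lm-reg1} from the scalar relation $s'(t,\mu)=-\mu r(t,\mu)$ applied coordinatewise. Your added care about identifying the series \eqref{op-S} with the abstract resolvent from Pr\"uss's theorem, and about deducing $R(t)v\in D(A)$ from the square-summability of the Fourier coefficients of $S'(t)v$, only tightens steps the paper leaves implicit.
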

\begin{proof}
Since the kernel function $l$ is nonnegative and nonincreasing, it is of positive type. In addition, the assumption (\textsf A) ensures that $-A$ generates a bounded analytic semigroup. So \eqref{eq-base} is parabolic, according to Proposition \ref{pp-parabolic}. Therefore, it follows from Proposition \ref{pp-reg} that $S(\cdot)$ is differentiable on $(0,\infty)$ and estimate \eqref{lm-reg2} takes place. Finally, it is deduced from the formulation of $S$ and $R$ given by \eqref{op-S}-\eqref{op-R} that
\begin{align*}
S'(t)v & = \sum_{n=1}^\infty \partial_t s(t,\lambda_n)(v,e_n)e_n\\
& = \sum_{n=1}^\infty -\lambda_n r(t,\lambda_n) (v,e_n)e_n = -AR(t)v, \; t>0, v\in H,
\end{align*}
thanks to \eqref{eq-sr1}, which proves \eqref{lm-reg1}. 
\end{proof}
Denote by $C^\gamma([a,b];H)$, $\gamma\in (0,1)$, the space of H\"older continuous functions on $[a,b]$, that is, $f\in C^\gamma([a,b];H)$ iff
$$
\|f\|_{C^\gamma} = \sup_{t_1,t_2\in [a,b]}\frac{\|f(t_1)-f(t_2)\|}{|t_1-t_2|^\gamma}<\infty.
$$
\begin{theorem}\label{th-reg}
Let the hypotheses of Lemma \ref{lm-reg} hold. Assume that the function $g$ in \eqref{le1} belongs to $C^\gamma([0,T];H)$, and $u$ is the weak solution of \eqref{le1}-\eqref{le2}. Then $u\in C([0,T];H)\cap C^\gamma([\delta,T];H)$ for any $0<\delta<T$, and $u$ is a classical solution.
\end{theorem}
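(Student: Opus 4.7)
The plan is to work from the mild representation $u(t)=S(t)u_0+(R*g)(t)$, which coincides with the unique weak solution by Theorems \ref{th-wsol} and \ref{th-uniq}. Continuity on $[0,T]$ is already supplied by Lemma \ref{lm-SR} and Remark \ref{rm-SR}, so only two genuine tasks remain: prove H\"older continuity on $[\delta,T]$, and upgrade the equation from holding in $V_{-\frac12}$ to holding in $H$, which is what "classical solution" should mean here.

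For H\"older continuity on $[\delta,T]$, I would estimate $u(t_2)-u(t_1)$ for $\delta\le t_1<t_2\le T$ piece by piece. The homogeneous part $S(\cdot)u_0$ is in fact Lipschitz on $[\delta,T]$: Lemma \ref{lm-reg} gives $S\in C^1((0,\infty);\mathcal L(H))$ with $\|S'(t)\|\le M/t$, so $\|S(t_2)u_0-S(t_1)u_0\|\le M\delta^{-1}(t_2-t_1)\|u_0\|$, which is trivially $\gamma$-H\"older. For the convolution part I would use the symmetric form $(R*g)(t)=\int_0^t R(s)g(t-s)\,ds$ and split as
\begin{align*}
(R*g)(t_2)-(R*g)(t_1) = \int_0^{t_1} R(s)[g(t_2-s)-g(t_1-s)]\,ds + \int_{t_1}^{t_2} R(s)g(t_2-s)\,ds.
\end{align*}
The first integral is bounded by $\lambda_1^{-1}\|g\|_{C^\gamma}(t_2-t_1)^\gamma$ via \eqref{lm-SR3a} and \eqref{eq-sr1}; the second, using that $r(\cdot,\lambda_1)$ is continuous and hence bounded on $[\delta,T]$, is $O(t_2-t_1)$ and therefore $\gamma$-H\"older after rescaling by $T^{1-\gamma}$.

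To reach classical solvability, the heart of the matter is to show $A(R*g)(t)\in H$ for $t\in(0,T]$. I would use the standard H\"older-subtraction decomposition
$$
A(R*g)(t) = A\int_0^t R(t-s)[g(s)-g(t)]\,ds + A\int_0^t R(\tau)g(t)\,d\tau.
$$
The second term is handled by the spectral identity $A\int_0^t R(\tau)\,d\tau=I-S(t)$, which follows directly from \eqref{eq-sr1} applied mode by mode, and therefore equals $(I-S(t))g(t)\in H$. For the first term I would invoke $AR(\tau)=-S'(\tau)$ from \eqref{lm-reg1} together with \eqref{lm-reg2} and the H\"older hypothesis to obtain
$$
\Bigl\|\int_0^t S'(t-s)[g(s)-g(t)]\,ds\Bigr\|\le M\|g\|_{C^\gamma}\int_0^t (t-s)^{\gamma-1}\,ds=\frac{M}{\gamma}\|g\|_{C^\gamma}\,t^\gamma.
$$
Combined with $AS(\cdot)u_0\in C((0,T];H)$ from \eqref{lm-SR2} and Lemma \ref{lm-SR}, this yields $Au\in C((0,T];H)$, and so the equation $\frac{d}{dt}[k*(u-u_0)](t)=-Au(t)+g(t)$, already valid in $V_{-\frac12}$ by Theorem \ref{th-wsol}, automatically holds in $H$.

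The main technical obstacle I anticipate is rigorously justifying the interchange of $A$ with the integral in the first piece of the decomposition above, because $\|AR(\tau)\|$ blows up like $\tau^{-1}$ near zero and no integrable majorant exists for $\|AR(\cdot)\|$ alone. I would resolve this by first working on $\int_0^{t-\epsilon}$, where $AR$ is uniformly bounded and the interchange is elementary, and then passing to $\epsilon\to 0^+$ using the uniform integrable majorant $\|S'(t-s)[g(s)-g(t)]\|\le M\|g\|_{C^\gamma}(t-s)^{\gamma-1}$ and the closedness of $A$ to pull $A$ back outside the convergent integral.
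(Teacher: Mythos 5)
Your proposal is correct and follows essentially the same route as the paper: the mild representation $u=S(\cdot)u_0+R*g$, the $M/t$ bound on $S'$ for the homogeneous part, the split of the convolution increment into a $\gamma$-H\"older piece and a short ``tail'' integral, and the H\"older-subtraction decomposition $A(R*g)(t)=-\int_0^t S'(t-s)[g(s)-g(t)]\,ds+(I-S(t))g(t)$ for classical solvability. The only differences are cosmetic --- you bound the tail integral by the boundedness of $r(\cdot,\lambda_1)$ on $[\delta,T]$ where the paper uses $R(\tau)=-A^{-1}S'(\tau)$ and a logarithmic estimate, and you add an explicit truncation-plus-closedness argument to justify moving $A$ inside the integral, a point the paper leaves implicit.
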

\begin{proof}
Recall that the unique weak solution of \eqref{le1}-\eqref{le2} is given by
\begin{equation}\label{th-reg0}
u(t) = S(t)u_0 + (R*g)(t) = u_1(t) + u_2(t),\; t\in [0,T].
\end{equation}
We first show that $u_2$ is H\"older continuous on $[\delta,T]$. Indeed, for $t\in [\delta,T), h>0$, we have
\begin{align*}
\|u_2(t+h)-u_2(t)\| & \le \int_0^t \|R(\tau)\|\|g(t+h-\tau)-g(t-\tau)\|d\tau \\
&\quad + \int_t^{t+h}\|R(\tau)\|\|g(t+h-\tau)\|d\tau\\
& = I_1 + I_2.
\end{align*}
Considering $I_1$, one gets
\begin{align*}
I_1 & \le \int_0^t r(\tau,\lambda_1)\|g\|_{C^\gamma}h^\gamma d\tau = \|g\|_{C^\gamma}h^\gamma \lambda_1^{-1}(1-s(t,\lambda_1))\\
& \le \|g\|_{C^\gamma}\lambda_1^{-1}h^\gamma.
\end{align*}
Concerning $I_2$, the relation $S'(t) = -AR(t)$ for $t>0$ implies
\begin{align*}
I_2 & \le \|(-A)^{-1}\|\int_t^{t+h}\|S'(\tau)\|\|g(t+h-\tau)\|d\tau\\
& \le \|A^{-1}\|\|g\|_\infty M \int_t^{t+h}\frac{d\tau}{\tau} = \|A^{-1}\|\|g\|_\infty M \ln\left(1+\frac h t\right)\\
& \le \|A^{-1}\|\|g\|_\infty M \gamma^{-1} \left(\frac{h}{t}\right)^\gamma\\
& \le \|A^{-1}\|\|g\|_\infty M \gamma^{-1}\delta^{-\gamma} h^\gamma,
\end{align*}
here we utilize the inequality
$$
\ln(1+r)\le \frac{r^\gamma}{\gamma}\;\text{ for } r>0, \gamma \in (0,1).
$$
So we have proved that $\|u_2(t+h)-u_2(t)\|\le Ch^\gamma$ with $$C = \|g\|_{C^\gamma}\lambda_1^{-1}+\|A^{-1}\|\|g\|_\infty M \gamma^{-1}\delta^{-\gamma}.$$
It remains to show that $u_1\in C^\gamma([\delta,T];H)$. Let $0<\delta\le t<T$ and $h>0$. Using the mean value formula
$$
S(t+h)v - S(t)v= h\int_0^1 S'(t+\theta h)vd\theta,\; v\in H,
$$
we have
\begin{align*}
\|u_1(t+h) - u_1(t)\| & = \|S(t+h)u_0 - S(t)u_0\|\\
& \le h\int_0^1 \|S'(t+\theta h\|\|v\|d\theta\\
& \le M \|v\| h \int_0^1 \frac{d\theta}{t+\theta h} = M \|v\|\ln \left(1+\frac h t\right)\\
& \le M \|v\| \gamma^{-1}\delta^{-\gamma} h^\gamma.
\end{align*}
Finally, we have to show that $u$ is classical, that is, $u$ given by \eqref{th-reg0} obeys the system \eqref{le1}-\eqref{le2} in $H$. In the proof of Theorem \ref{th-wsol}, we have testified that \eqref{le1} holds in $V_{-\frac 12}$ by reasoning that $A(R*g)(t)\in V_{-\frac 12}$ for $t>0$. So it suffices to prove $A(R*g)(t)\in H$ for $t>0$ under the assumption that $g$ is H\"older continuous. Indeed, using the relation $S'(t) = -AR(t)$ for $t>0$ again, we obtain
\begin{align*}
A(R*g)(t) & = \int_0^t AR(t-\tau)g(\tau)d\tau = -\int_0^t S'(t-\tau)g(\tau)d\tau\\
& = -\int_0^t S'(t-\tau)[g(\tau)-g(t)]d\tau + [I-S(t)]g(t).
\end{align*}
Then
\begin{align*}
\|A(R*g)(t)\| & \le \int_0^t \|S'(t-\tau)\|\|g(\tau)-g(t)\|d\tau + \|[I-S(t)]g(t)\|\\
& \le M\|g\|_{C^\gamma}\int_0^t (t-\tau)^{\gamma-1}d\tau + \|[I-S(t)]g(t)\|\\
& \le M\|g\|_{C^\gamma}\gamma^{-1}T^\gamma + 2\|g\|_\infty,\; \text{ for } 0<t\le T,
\end{align*}
which completes the proof.
\end{proof}
\section{Stability and regularity for semilinear equations}
\begin{definition}
A function $u\in C([0,T];H)$ is called a mild solution of the problem \eqref{e1}-\eqref{e2} on $[0,T]$ iff
$$
u(t) = S(t)u_0 + \int_0^t R(t-s)f(u(s))ds,
$$
for every $t\in [0,T]$, where $S(\cdot)$ and $R(\cdot)$ are given by \eqref{op-S}-\eqref{op-R}. 
\end{definition}

In the next theorem, we prove a local solvability result.
\begin{theorem}\label{th-locsol}
Let (\textsf A), (\textsf K) and (\textsf F) be satisfied. Then there exists $t^*>0$ such that the problem \eqref{e1}-\eqref{e2} has a mild solution defined on $[0,t^*]$. Moreover, $u(t)\in V_{\frac 12}$ for all $t\in (0, t^*]$.
\end{theorem}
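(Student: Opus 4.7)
The plan is a Banach fixed point argument on a short time interval, carried out in the space $C([0,t^*];H)$ with the sup norm. Pick any $\rho>\|u_0\|$. Because $S(\cdot)u_0\in C([0,T];H)$ with $S(0)u_0=u_0$ (Lemma~\ref{lm-SR} and Remark~\ref{rm-SR}), one can choose $t_0>0$ small enough that $\|S(t)u_0\|\le\tfrac{\|u_0\|+\rho}{2}$ for $t\in[0,t_0]$. Let
\[
\mathcal{B}_\rho(t^*)=\{u\in C([0,t^*];H):\,\|u(t)\|\le\rho\text{ for all }t\in[0,t^*]\},
\]
with $t^*\in(0,t_0]$ to be fixed. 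This is a complete metric space.

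On $\mathcal{B}_\rho(t^*)$ I would define
\[
\Phi(u)(t)=S(t)u_0+\int_0^t R(t-s)f(u(s))\,ds.
\]
Note that for $u\in\mathcal{B}_\rho(t^*)$, hypothesis (\textsf{F}) gives $\|f(u(s))\|\le \kappa(\rho)\rho+\|f(0)\|=:M_\rho$, so $s\mapsto f(u(s))$ is continuous into $H$ (locally Lipschitz applied to a continuous path) and Lemma~\ref{lm-SR}(2) ensures $\Phi(u)\in C([0,t^*];H)$. Using \eqref{lm-SR3b} together with the identity $\int_0^t r(\tau,\lambda_1)d\tau=\lambda_1^{-1}(1-s(t,\lambda_1))$ from \eqref{eq-sr1}, we obtain
\[
\|\Phi(u)(t)\|\le\tfrac{\|u_0\|+\rho}{2}+M_\rho\,\lambda_1^{-1}(1-s(t^*,\lambda_1)).
\]
Since $s(t,\lambda_1)\to 1$ as $t\to 0^+$, for $t^*$ small enough the second term is at most $\tfrac{\rho-\|u_0\|}{2}$, hence $\Phi(\mathcal{B}_\rho(t^*))\subset\mathcal{B}_\rho(t^*)$. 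By the same estimate, for $u,v\in\mathcal{B}_\rho(t^*)$,
\[
\|\Phi(u)(t)-\Phi(v)(t)\|\le\kappa(\rho)\,\lambda_1^{-1}(1-s(t^*,\lambda_1))\,\sup_{s\in[0,t^*]}\|u(s)-v(s)\|,
\]
so after shrinking $t^*$ further we arrange $q:=\kappa(\rho)\lambda_1^{-1}(1-s(t^*,\lambda_1))<1$, and $\Phi$ is a contraction. The Banach fixed point theorem yields a unique $u\in\mathcal{B}_\rho(t^*)$ with $u=\Phi(u)$, which is by definition a mild solution of \eqref{e1}-\eqref{e2} on $[0,t^*]$.

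For the $V_{1/2}$-regularity, once the fixed point $u$ is in hand, set $g(s)=f(u(s))$, which belongs to $C([0,t^*];H)$. Remark~\ref{rm-SR}(3) tells us $S(\cdot)u_0\in C((0,t^*];V_{1/2})$ and $R*g\in C((0,t^*];V_{1/2})$, so $u=S(\cdot)u_0+R*g\in C((0,t^*];V_{1/2})$; in particular $u(t)\in V_{1/2}$ for every $t\in(0,t^*]$.

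The only real subtlety is choosing $t^*$ simultaneously to keep $\Phi$ invariant on the ball and contractive; both requirements reduce to the same scalar quantity $\lambda_1^{-1}(1-s(t^*,\lambda_1))$ vanishing as $t^*\to 0^+$, which follows from the continuity of $s(\cdot,\lambda_1)$ at zero with $s(0,\lambda_1)=1$. All other ingredients (continuity, norm bounds, $V_{1/2}$ lift) are direct applications of Lemma~\ref{lm-SR} and Remark~\ref{rm-SR}, so once the time window is fixed the argument is routine.
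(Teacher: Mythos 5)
Your proposal is correct and follows essentially the same route as the paper: a Banach fixed point argument for $\Phi(u)(t)=S(t)u_0+\int_0^t R(t-s)f(u(s))\,ds$ on a ball in $C([0,t^*];H)$, with invariance and contractivity both controlled by $\lambda_1^{-1}(1-s(t^*,\lambda_1))\to 0$, and the $V_{1/2}$-membership obtained from Remark~\ref{rm-SR}. The only cosmetic difference is that you bound $\|S(t)u_0\|$ by $\tfrac{\|u_0\|+\rho}{2}$ via continuity at $t=0$, whereas the paper simply uses $\|S(t)u_0\|\le s(t,\lambda_1)\|u_0\|\le\|u_0\|<\rho$ for all $t$, which spares one smallness condition.
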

\begin{proof}
We make use of the contraction mapping principle. For given $\zeta \in (0,T]$, let $\Phi: C([0,\zeta];H)\to C([0,\zeta];H)$ be the mapping defined by
\begin{equation}\label{sol-op}
\Phi(u)(t) = S(t) u_0 + \int_0^t R(t-\tau)f(u(\tau))d\tau, t\in [0,\zeta].
\end{equation}
Taking $\rho>\|u_0\|$ and assuming that $u\in \mathbb B_\rho$, the closed ball in $C([0,\zeta];H)$ with center at origin and radius $\rho$, we have
\begin{align*}
\|\Phi(u)(t)\| & \le \|S(t)\|\|u_0\| + \int_0^t \|R(t-\tau)\|\|f(u(\tau))\| d\tau\\
& \le s(t,\lambda_1)\|u_0\| + \int_0^t r(t-\tau,\lambda_1)[\kappa(\rho)\|u(\tau)\| + \|f(0)\|]d\tau\\
& \le \|u_0\| + [\kappa(\rho)\rho + \|f(0)\|]\lambda_1^{-1}(1-s(t,\lambda_1)),\; t\in [0,\zeta],
\end{align*}
here we employ the hypothesis (\textsf{F}), Proposition \ref{pp-sr} and Lemma \ref{lm-SR}. Since $s(\cdot,\lambda_1)\in AC([0,\zeta])$ and $s(0,\lambda_1)=1$, one can choose $\zeta$ such that the last expression is smaller than $\rho$ as long as $t\in [0,\zeta]$. That is, $\Phi(\mathbb B_\rho)\subset \mathbb B_\rho$.

Using (\textsf{F}) again, one gets
\begin{align*}
\|\Phi(u_1)(t)-\Phi(u_2)(t)\| & \le \int_0^t r(t-\tau,\lambda_1)\|f(u_1(\tau)-f(u_2(\tau))\|d\tau\\
& \le \int_0^t r(t-\tau,\lambda_1)\kappa(\rho)\|u_1(\tau)-u_2(\tau)\|d\tau\\
& \le \kappa(\rho)\|u_1-u_2\|_\infty \lambda_1^{-1}(1-s(t,\lambda_1)),\; t\in [0,\zeta],
\end{align*}
where $\|\cdot\|_\infty$ is the sup norm in $C([0,\zeta];H)$. Taking $t^*\le \zeta$ such that $\kappa(\rho) (1-s(t^*,\lambda_1))<\lambda_1$, we see that $\Phi$ is a contraction as a map from $\mathbb B_\rho$ into itself, with $\mathbb B_\rho$ now in $C([0,t^*];H)$. So the problem \eqref{e1}-\eqref{e2} has a solution defined on $[0,t^*]$. In addition, since $t\mapsto g(t) = f(u(t))$ is a continuous function, $\Phi(u)(t)\in D(A^{\frac 12})$ for $t>0$ due to Remark \ref{rm-SR}. So $u(t)\in V_{\frac 12}$ for $t>0$. The proof is complete.

\end{proof}
We now discuss some circumstances, in which solutions exist globally.
\begin{theorem}\label{th-glosol1}
Let (\textsf A) and (\textsf K) hold. If the nonlinear function $f$ is globally Lipschitzian, that is, $\kappa(\rho)$ in (\textsf F) is constant, then the problem \eqref{e1}-\eqref{e2} has a unique global mild solution $u\in C([0,T];H)\cap C((0,T];V_{\frac 12})$. If, in addition, that $\kappa<\lambda_1$ and $l\not\in L^1(\mathbb R^+)$, then every mild solution to \eqref{e1} is globally bounded and asymptotically stable.
\end{theorem}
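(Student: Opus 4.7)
The argument falls naturally into three parts.

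\emph{Global existence and uniqueness.} Since $\kappa$ is now a constant, the contraction estimate from the proof of Theorem \ref{th-locsol} simplifies to
$$\|\Phi(u_1)(t)-\Phi(u_2)(t)\| \le \kappa\int_0^t r(t-\tau,\lambda_1)\|u_1(\tau)-u_2(\tau)\|d\tau.$$
My plan is to equip $C([0,T];H)$, for arbitrary $T>0$, with a Bielecki-type norm $\|u\|_\omega = \sup_{t\in[0,T]} e^{-\omega t}\|u(t)\|$, which is equivalent to the usual sup norm. Using $r(\cdot,\lambda_1)\in L^1(\mathbb R^+)$ from Remark \ref{rm-sr}(5) and dominated convergence, one has $\int_0^\infty e^{-\omega\tau}r(\tau,\lambda_1)d\tau\to 0$ as $\omega\to\infty$, so $\omega$ can be picked large enough to make $\Phi$ a strict contraction on the whole of $C([0,T];H)$. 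Banach's fixed point theorem then delivers a unique mild solution on every finite interval, hence on $\mathbb R^+$. The additional regularity $u\in C((0,T];V_{\frac 12})$ follows by applying Remark \ref{rm-SR}(3) to $g(\cdot)=f(u(\cdot))\in C([0,T];H)$.

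\emph{Global boundedness.} Starting from the mild-solution formula and using $\|f(v)\|\le \kappa\|v\|+\|f(0)\|$ together with Lemma \ref{lm-SR}, we have
$$\|u(t)\| \le s(t,\lambda_1)\|u_0\| + \int_0^t r(t-\tau,\lambda_1)\bigl[\kappa\|u(\tau)\| + \|f(0)\|\bigr]d\tau.$$
Now I invoke Proposition \ref{pp-gronwall} with $\mu=\lambda_1$, $\alpha=\kappa$, $\beta=\|f(0)\|$, which is permissible precisely because $\kappa<\lambda_1$, to get
$$\|u(t)\|\le s(t,\lambda_1-\kappa)\|u_0\|+\frac{\|f(0)\|}{\lambda_1-\kappa}\bigl(1-s(t,\lambda_1-\kappa)\bigr)\le \|u_0\|+\frac{\|f(0)\|}{\lambda_1-\kappa},$$
where the last inequality uses $0\le s(\cdot,\lambda_1-\kappa)\le 1$ from Proposition \ref{pp-sr}(1).

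\emph{Asymptotic stability.} For two mild solutions $u,v$ with data $u_0,v_0$, setting $w=u-v$ and applying the global Lipschitz bound gives
$$\|w(t)\| \le s(t,\lambda_1)\|u_0-v_0\| + \kappa\int_0^t r(t-\tau,\lambda_1)\|w(\tau)\|d\tau,$$
and Proposition \ref{pp-gronwall} with $\beta=0$ produces $\|w(t)\|\le s(t,\lambda_1-\kappa)\|u_0-v_0\|$. Since $l\notin L^1(\mathbb R^+)$, Remark \ref{rm-sr}(4) forces $s(t,\lambda_1-\kappa)\to 0$ as $t\to\infty$, yielding both Lyapunov stability and $\|u(t)-v(t)\|\to 0$. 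The principal technical hurdle lies in the global-existence step: the finite-moment decay $\int_0^\infty e^{-\omega\tau}r(\tau,\lambda_1)d\tau\to 0$ as $\omega\to\infty$ must be verified with care, after which everything else is a direct application of the Gronwall-type inequality already at our disposal. An alternative route would iterate Theorem \ref{th-locsol} on intervals of uniform length (possible since $\kappa$ no longer depends on $\rho$) and combine it with the a priori bound just derived to rule out blow-up.
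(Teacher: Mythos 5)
Your proposal is correct and follows essentially the same route as the paper: the same Bielecki-type weighted norm $\sup_t e^{-\omega t}\|u(t)\|$ for the global contraction (justified, as you note, by $\int_0^t r(\tau,\lambda_1)d\tau\le\lambda_1^{-1}$ from Remark \ref{rm-sr}(5)), and the same two applications of Proposition \ref{pp-gronwall} for boundedness and for the decay $\|u(t)-v(t)\|\le s(t,\lambda_1-\kappa)\|u_0-v_0\|\to 0$. No gaps.
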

\begin{proof}
Fixed $T>0$, let $\beta>0$ and $\|u\|_\beta  = \sup\limits_{t\in [0,T]}e^{-\beta t}\|u(t)\|$. Then $\|\cdot\|_\beta$ is  equivalent to the sup norm in $C([0,T];H)$. From the estimate
\begin{align*}
\|\Phi(u_1)(t) - \Phi(u_2)(t)\| & \le \int_0^t r(t-\tau,\lambda_1)\kappa\|u_1(\tau)-u_2(\tau)\|d\tau,
\end{align*}
we get
\begin{align*}
e^{-\beta t}\|\Phi(u_1)(t) - \Phi(u_2)(t)\| & \le \left(\kappa \int_0^t r(t-\tau,\lambda_1)e^{-\beta (t-\tau)}d\tau\right) \|u_1-u_2\|_\beta\\
& \le \left(\kappa \int_0^T r(t,\lambda_1)e^{-\beta t}dt\right) \|u_1-u_2\|_\beta.
\end{align*}
Choosing $\beta>0$ such that 
$$
\kappa \int_0^T r(t,\lambda_1)e^{-\beta t}dt <1,
$$
we obtain $\Phi$ is a contraction map from $C([0,T];H)$ endowed with the norm $\|\cdot\|_\beta$ into itself, which ensures the existence and uniqueness of solution to \eqref{e1}-\eqref{e2}. In addition, we have $u(t)\in V_{\frac 12}$ for $t\in (0,T]$, by the same reasoning as in the proof of Theorem \ref{th-locsol}.

Now assume that $\kappa<\lambda_1$. Let $u$ be a solution of \eqref{e1}-\eqref{e2}, then we have
\begin{align*}
\|u(t)\|\le s(t,\lambda_1)\|u_0\| + \int_0^t r(t-\tau, \lambda_1)[\kappa \|u(\tau)\| + \|f(0)\|]d\tau,\;\forall t\ge 0.
\end{align*}
Using the Gronwall type inequality given in Proposition \ref{pp-gronwall}, we get
\begin{align*}
\|u(t)\| & \le s(t,\lambda_1-\kappa)\|u_0\| + \frac{1}{\lambda_1-\kappa}\|f(0)\| (1-s(t,\lambda_1-\kappa))\\
& \le \|u_0\| + \frac{1}{\lambda_1-\kappa}\|f(0)\|,\;\forall t\ge 0,
\end{align*}
which yields the global boundedness of $u$.

Let $u$ and $v$ be solutions of \eqref{e1}, then we have
\begin{align*}
\|u(t)-v(t)\| \le s(t,\lambda_1)\|u(0)-v(0)\| + \int_0^t r(t-\tau,\lambda_1)\kappa\|u(\tau)-v(\tau)\|d\tau,
\end{align*}
thanks to (\textsf F) and Lemma \ref{lm-SR1}. Employing Proposition \ref{pp-gronwall} again, we obtain
\begin{align*}
\|u(t)-v(t)\| \le s(t,\lambda_1-\kappa)\|u(0)-v(0)\|,\; \forall t\ge 0.
\end{align*}
Since $l\not\in L^1(\mathbb R^+)$, it follows from Proposition \ref{pp-sr}(1) that $s(t,\lambda_1-\kappa)\to 0$ as $t\to\infty$, which completes the proof.
\end{proof}
The following theorems show the main results of this section.
\begin{theorem}\label{th-glosol2}
Let (\textsf A), (\textsf K) and (\textsf F) hold. If $\lim\limits_{v\to 0}\dfrac{\|f(v)\|}{\|v\|} = \alpha$ with $\alpha\in [0,\lambda_1)$, then there exists $\delta>0$ such that the problem \eqref{e1}-\eqref{e2} admits a unique global mild solution $u\in C([0,T];H)\cap C((0,T];V_{\frac 12})$, provided that $\|u_0\|\le \delta$. 
\end{theorem}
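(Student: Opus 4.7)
The hypothesis $\lim_{v\to 0}\|f(v)\|/\|v\|=\alpha$ forces $f(0)=0$ and, for any fixed $\epsilon\in(0,\lambda_1-\alpha)$, yields an $r>0$ such that
$$\|f(v)\|\le(\alpha+\epsilon)\|v\|\quad\text{whenever }\|v\|\le r.$$
My plan is to choose $\delta\in(0,r)$ so that whenever $\|u_0\|\le\delta$, the local solution supplied by Theorem \ref{th-locsol} is trapped in $\overline{B_r}$, and hence can be prolonged indefinitely with uniform step size.

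\textbf{A priori bound via the Gronwall inequality.} Let $u$ be a mild solution on some maximal interval $[0,T_{\max})$. I would first consider the (a priori possibly small) subinterval $[0,t_1]\subset[0,T_{\max})$ on which $\|u(s)\|\le r$. Combining $\|R(t-\tau)\|_{\mathcal L(H)}\le r(t-\tau,\lambda_1)$ from Lemma \ref{lm-SR} with the local bound on $f$ gives
$$\|u(t)\|\le s(t,\lambda_1)\|u_0\|+\int_0^t r(t-\tau,\lambda_1)(\alpha+\epsilon)\|u(\tau)\|\,d\tau,\qquad t\in[0,t_1].$$
Proposition \ref{pp-gronwall}, applied with $\mu=\lambda_1$, the coefficient $\alpha+\epsilon$ in place of the proposition's ``$\alpha$'', and $\beta\equiv 0$, then yields
$$\|u(t)\|\le s(t,\lambda_1-\alpha-\epsilon)\|u_0\|\le\|u_0\|\le\delta<r.$$
This is the heart of the argument: the bound is strictly better than $r$, so a standard open-closed bootstrap shows that $\{t\in[0,T_{\max}):\|u(s)\|\le r\ \forall s\le t\}$ is all of $[0,T_{\max})$.

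\textbf{Global continuation.} With the uniform a priori bound $\|u(t)\|\le\delta$ in hand, the Lipschitz constant $\kappa(r)$ governs the contraction estimate in the proof of Theorem \ref{th-locsol}. Since $r$ is fixed once for all, the condition $\kappa(r)(1-s(t^\ast,\lambda_1))<\lambda_1$ produces a step length $t^\ast>0$ that does \emph{not} depend on the current starting time. I would therefore iterate Theorem \ref{th-locsol} with initial data $u(nt^\ast)\in\overline{B_\delta}$, concatenating the pieces to reach any prescribed $T$ in finitely many steps. Uniqueness on each piece comes from the contraction argument, and the property $u(t)\in V_{1/2}$ for $t>0$ is inherited from Theorem \ref{th-locsol} via Remark \ref{rm-SR}.

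\textbf{Main obstacle.} The delicate point is securing a step size that does not shrink along the continuation; this is where the bound $\|u(t)\|\le\delta<r$ from Proposition \ref{pp-gronwall} is indispensable, because it lets us use the fixed Lipschitz constant $\kappa(r)$ throughout rather than $\kappa(\rho)$ with a growing $\rho$. A secondary but easily handled subtlety is the requirement $\alpha>0$ in Proposition \ref{pp-gronwall}; even when the hypothesis gives $\alpha=0$, the strictly positive coefficient $\alpha+\epsilon$ fits the proposition.
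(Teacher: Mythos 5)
Your a priori estimate is sound and in fact mirrors what the paper does in the proof of Theorem \ref{th-stab}: the combination of $\|R(t)\|\le r(t,\lambda_1)$, the local bound $\|f(v)\|\le(\alpha+\epsilon)\|v\|$ on $\overline{B_r}$, and Proposition \ref{pp-gronwall} correctly yields $\|u(t)\|\le s(t,\lambda_1-\alpha-\epsilon)\|u_0\|\le\delta<r$ on any interval where the solution stays in $\overline{B_r}$, and the open--closed bootstrap is legitimate. The gap is in the global continuation step. You propose to ``iterate Theorem \ref{th-locsol} with initial data $u(nt^\ast)$, concatenating the pieces,'' but this semigroup-style prolongation is not available for equation \eqref{e1}: the operators $S(t)$ and $R(t)$ defined by \eqref{op-S}--\eqref{op-R} do not satisfy a flow property $S(t+s)=S(t)S(s)$, and the equation is nonlocal in time. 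Restarting at $t_0=nt^\ast$ with datum $u(t_0)$ replaces the memory term $\frac{d}{dt}[k*(u-u_0)]$ (which integrates the history from $0$) by a term that only sees the history from $t_0$, so the concatenated function satisfies neither \eqref{e1} nor the mild-solution identity $u(t)=S(t)u_0+\int_0^tR(t-s)f(u(s))\,ds$ on $[0,T]$. Any honest extension argument must keep the full convolution tail $\int_0^{t_0}R(t-s)f(u(s))\,ds$ in the fixed-point map on $[t_0,t_0+h]$; you neither set this up nor verify the resulting invariance and contraction estimates, and without it the ``uniform step size'' claim does not establish global existence.

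For comparison, the paper avoids continuation entirely: it works on all of $[0,T]$ at once, shows $\Phi(\mathbb B_\eta)\subset\mathbb B_\eta$ for the solution map on $C([0,T];H)$ using the same smallness of $f$ near $0$, then builds a compact convex invariant set via the Hausdorff measure of noncompactness (with Proposition \ref{pp-gronwall} forcing $\mu_\infty\equiv0$) and applies the Schauder fixed point theorem; uniqueness again comes from Proposition \ref{pp-gronwall}. Your strategy could be repaired without Schauder, e.g.\ by truncating $f$ outside $\overline{B_r}$ to make it globally Lipschitz, invoking Theorem \ref{th-glosol1}, and then using your a priori bound to show the solution never leaves the region where $f$ is unmodified; or by running the weighted-norm contraction of Theorem \ref{th-glosol1} directly on the invariant ball $\mathbb B_\eta$ with the fixed constant $\kappa(\eta)$. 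Either of these would be a genuinely different (and arguably more elementary) route than the paper's, but as written your proof does not close.
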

\begin{proof}
In order to prove the global existence, we make use of the Schauder fixed point theorem. By assumption on the behaviour of $f$, for $\theta\in (0, \lambda_1-\alpha)$, there exists $\eta >0$ such that $\|f(v)\|\le (\alpha + \theta) \|v\| $ as long as $\|v\|\le \eta$. Now we consider the solution map $\Phi: \mathbb B_\eta\to C([0,T];H)$ defined by \eqref{sol-op}. We see that
\begin{align*}
\|\Phi(u)(t)\| & \le s(t,\lambda_1)\|u_0\| + \int_0^t r(t-\tau,\lambda_1)(\alpha+\theta)\|u(\tau)\|d\tau\\
& \le s(t,\lambda_1)\|u_0\| + (\alpha+\theta)\eta \lambda_1^{-1}(1-s(t,\lambda_1))\\
& \le s(t,\lambda_1)[\|u_0\| - (\alpha+\theta)\lambda_1^{-1}\eta ] + (\alpha+\theta)\lambda_1^{-1}\eta\\
& \le \eta, \; t\in [0,T],
\end{align*}
provided that $\|u_0\|\le \alpha\lambda_1^{-1}\eta$, thanks to the fact that $(\alpha+\theta)\lambda_1^{-1}<1$. Fixing an $\theta$ and $\eta$ mentioned above, for $\delta = \alpha\lambda_1^{-1}\eta$, we have shown that $\Phi(\mathbb B_\eta)\subset \mathbb B_\eta$ as $\|u_0\|\le \delta$.

In the next step, we construct a compact convex subset $\mathcal D\subset\mathbb B_\eta$ which is still invariant under $\Phi$ by using the same routine as in \cite{VAK}. Put $\mathcal M_0=\mathbb B_\eta$ and $\mathcal M_{k+1} = \overline{\text{co}}\,\Phi(\mathcal M_k), k\in\mathbb N$, where $\overline{\text{co}}$ denotes the closure of convex hull of subsets in $C([0,T];H)$. Obviously, $\mathcal M_k$ is closed, convex, and $\mathcal M_{k+1}\subset \mathcal M_k$ for all $k\in \mathbb N$. Let $\mathcal M=\bigcap\limits_{k=1}^\infty\mathcal M_k$, then $\mathcal M$ is also a closed convex set and $\Phi(\mathcal M)\subset \mathcal M$. We verify that $\mathcal M(t)$ is relatively compact for each $t\ge 0$. Indeed, let $\chi$ be the Hausdorff measure of noncompactness on $H$. Then it suffices to testify that $\lim\limits_{k\to\infty}\chi(\mathcal  M_k(t))=0$ for each $t\ge 0$. For $D\subset \mathbb B_\eta$, we get
\begin{align*}
\chi(f(D(t)))\le \kappa(\eta)\chi(D(t)),\;\forall t\ge 0,
\end{align*}
thanks to the Lipschitz property of $f$ (see \cite{KOZ}), here $D(t)=\{v(t): v\in D\}$. Therefore,
\begin{align*}
\chi(\mathcal M_{k+1}(t)) & = \chi(\Phi(\mathcal M_k)(t))\\
& \le \chi\left(\int_0^t R(t-\tau)f(\mathcal M_k(\tau))d\tau\right)\\
& \le \int_0^t \|R(t-\tau)\|\chi(f(\mathcal M_k (\tau)))d\tau\\
& \le \int_0^t r(t-\tau,\lambda_1)\kappa(\eta)\chi(\mathcal M_k (\tau))d\tau.
\end{align*}
Let $\mu_k(t) = \chi(\mathcal M_k (t)), t\ge 0$, then $\mu_k$ is nonincreasing and the last estimate reads
$$
\mu_{k+1}(t) \le \int_0^t r(t-\tau,\lambda_1)\kappa(\eta)\mu_k(\tau)d\tau.
$$
Passing to the limit in the last relation, one gets
$$
\mu_\infty (t) \le \int_0^t r(t-\tau,\lambda_1)\kappa(\eta)\mu_\infty (\tau)d\tau,
$$
where $\mu_\infty(t) = \lim\limits_{k\to\infty}\mu(t), t\ge 0$. Now applying Proposition \ref{pp-gronwall} yields $\mu_\infty(t) = 0$ for every $t\ge 0$. We have proved that $\mathcal M(t)$ is relatively compact for each $t\ge 0$. 

We are now in a position to define the set $\mathcal D=\overline{\text{co}}\,\Phi(\mathcal M)$. Since $\Phi (\mathcal M)\subset\mathcal M$ and $\mathcal M$ is closed and convex, we see that $\mathcal D\subset \mathcal M$, and then
$$
\Phi(\mathcal D)\subset \Phi(\mathcal M)\subset \overline{\text{co}}\Phi(\mathcal M)=\mathcal D.
$$
Moreover, since $\mathcal D(t)\subset \mathcal M(t)$, we have $\mathcal D(t)$ relatively compact for every $t\ge 0$. It remains to show that $\mathcal D$ is a equicontinuous subset, which is equivalent to that $\Phi(\mathcal M)$ has this property. Observe that, if $Y\subset H$ is a relatively compact set, then the set $R(\cdot)Y=\{R(\cdot)y: y\in Y\}$ is equicontinuous on $(0,T]$. The same reasoning ensures the set $R(\cdot-\tau)f(\mathcal M(\tau))$ is equicontinuous on $(\tau, T]$. This implies the equicontinuity of the set
$$
\Phi(\mathcal M)=\{v: v(t) \in S(t)u_0 + \int_0^t R(t-\tau)f(\mathcal M(\tau))d\tau\}.
$$
Thus $\mathcal D$ is compact due to the Arzela-Ascoli theorem. This enables us to utilize the Schauder fixed point theorem for $\Phi:\mathcal D\to\mathcal D$ to obtain a mild solution for \eqref{e1}-\eqref{e2}. 

Finally, if $u_1$ and $u_2$ is two mild solutions for \eqref{e1}-\eqref{e2}, $v=u_1-u_2$, and $\rho=\max\{\|u_1\|_\infty, \|u_2\|_\infty\}$ then
\begin{align*}
\|v(t)\| & \le \int_0^t \|R(t-\tau)\| \|f(u_1(\tau))-f(u_2(\tau))\|d\tau\\
& \le \int_0^t r(t-\tau,\lambda_1)\kappa(\rho)\|v(\tau)\|d\tau, \;t\in [0,T].
\end{align*}
Applying Proposition \ref{pp-gronwall} yields $v=0$. The proof is complete.
\end{proof}
\begin{theorem}\label{th-stab}
Let the hypotheses of Theorem \ref{th-glosol2} hold. If $l\not\in L^1(\mathbb R^+)$, then the zero solution of \eqref{e1}-\eqref{e2} is asymptotically stable.
\end{theorem}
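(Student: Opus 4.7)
The plan is to combine the mild-solution representation with the Gronwall-type inequality of Proposition \ref{pp-gronwall} and the decay property noted in Remark \ref{rm-sr}(4). First I observe that the hypothesis $\lim_{v\to 0}\|f(v)\|/\|v\|=\alpha$ forces $f(0)=0$, so $u\equiv 0$ is a mild solution of \eqref{e1}-\eqref{e2} with $u_0=0$, and asymptotic stability is a meaningful notion. Given any $\theta>0$ with $\alpha+\theta<\lambda_1$, I would pick $\eta>0$ such that $\|f(v)\|\le(\alpha+\theta)\|v\|$ whenever $\|v\|\le\eta$, exactly as in the proof of Theorem \ref{th-glosol2}.

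Next, for an initial datum $u_0$ with $\|u_0\|$ no larger than the threshold $\delta$ produced by Theorem \ref{th-glosol2} (applied to the chosen $\eta$), the global mild solution $u$ constructed there stays inside $\mathbb B_\eta$, hence $\|f(u(\tau))\|\le(\alpha+\theta)\|u(\tau)\|$ for all $\tau\ge 0$. Using the mild formula together with Lemma \ref{lm-SR}, I would derive
\begin{equation*}
\|u(t)\|\le s(t,\lambda_1)\|u_0\|+(\alpha+\theta)\int_0^t r(t-\tau,\lambda_1)\|u(\tau)\|\,d\tau,\quad t\ge 0.
\end{equation*}
This puts me in position to invoke Proposition \ref{pp-gronwall} with $\mu=\lambda_1$, $v_0=\|u_0\|$, the Gronwall constant equal to $\alpha+\theta$ and $\beta\equiv 0$; since $\lambda_1-(\alpha+\theta)>0$, the conclusion is
\begin{equation*}
\|u(t)\|\le s(t,\lambda_1-\alpha-\theta)\,\|u_0\|,\quad t\ge 0.
\end{equation*}

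With this master estimate in hand, Lyapunov stability is immediate: because $s(\cdot,\lambda_1-\alpha-\theta)$ is nonincreasing with $s(0,\lambda_1-\alpha-\theta)=1$, one has $\|u(t)\|\le\|u_0\|$, so given $\varepsilon>0$ choose $\delta_\varepsilon=\min(\delta,\varepsilon)$. For attraction, I use Remark \ref{rm-sr}(4): the assumption $l\notin L^1(\mathbb R^+)$ yields $s(t,\lambda_1-\alpha-\theta)\to 0$ as $t\to\infty$, hence $\|u(t)\|\to 0$. Together these give asymptotic stability of the zero solution.

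The only delicate point is guaranteeing that the small-data solution actually stays in $\mathbb B_\eta$ for all times, so that the superlinear-at-zero hypothesis can be used as a global Lipschitz-type bound along the trajectory; this is exactly what Theorem \ref{th-glosol2} provides, so no new fixed-point argument is needed. Beyond that, the computation is a direct application of the Gronwall inequality already established, and the decay step is a one-line consequence of the $L^1$-failure of $l$.
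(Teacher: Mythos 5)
Your proposal is correct and follows essentially the same route as the paper: the integral inequality from the mild-solution formula with $\|f(v)\|\le(\alpha+\theta)\|v\|$ on $\mathbb B_\eta$, Proposition \ref{pp-gronwall} to obtain $\|u(t)\|\le s(t,\lambda_1-\alpha-\theta)\|u_0\|$, and the decay of $s(\cdot,\lambda_1-\alpha-\theta)$ from $l\notin L^1(\mathbb R^+)$. The extra details you supply (that $f(0)=0$ so zero is a solution, and the explicit $\varepsilon$--$\delta$ stability step) are correct refinements of the same argument.
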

\begin{proof}
Taken $\theta$ and $\delta$ from the proof of Theorem \ref{th-glosol2}, for $\|u_0\|\le \delta$ and a corresponding solution $u$ of \eqref{e1}-\eqref{e2}, we have
\begin{align*}
\|u(t)\|\le s(t,\lambda_1)\|u_0\| + \int_0^t r(t-\tau,\lambda_1)(\alpha+\theta)\|u(\tau)\|d\tau.
\end{align*}
Using Proposition \ref{pp-gronwall}, we get
$$
\|u(t)\|\le s(t, \lambda_1-\alpha-\theta)\|u_0\|, \;\forall t\ge 0.
$$
Since $l\not\in L^1(\mathbb R^+)$ and $\lambda_1-\alpha-\theta>0$, we have $s(t, \lambda_1-\alpha-\theta)\to 0$ as $t\to\infty$, and the last inequality ensures the stability and attractivity of the zero solution. The proof is complete.
\end{proof}

We now present a linearized stability result as a consequence of Theorem \ref{th-stab}.
\begin{corollary}
Let (\textsf{A}) and (\textsf{K}) hold. Assume that the nonlinearity $f$ is continuously differentiable such that $f(0)=0$ and $A-f'(0)$ remains positively definite. Then the zero solution of \eqref{e1} is asymptotically stable.
\end{corollary}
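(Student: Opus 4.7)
The plan is to reduce this linearized stability statement to Theorem \ref{th-stab} by absorbing the linear part of the nonlinearity into the differential operator. Set $B = f'(0)$, viewed as a bounded operator on $H$, and define $\tilde A = A - B$ on $D(\tilde A) = D(A)$ together with $g(v) = f(v) - Bv$. Then equation \eqref{e1} is equivalent to
$$
\frac{d}{dt}[k*(u-u_0)](t) + \tilde A u(t) = g(u(t)), \quad u(0) = u_0,
$$
to which I would apply Theorem \ref{th-stab} with $\tilde A$ in place of $A$ and $g$ in place of $f$.

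The first main task is to verify that $\tilde A$ still satisfies hypothesis (\textsf{A}). Dense definedness is immediate since $D(\tilde A) = D(A)$. Self-adjointness follows from that of $A$ once $B$ is taken self-adjoint, a condition I would read as implicit in the statement ``$A - f'(0)$ remains positively definite'', since positive definiteness is not typically asserted for a non-symmetric operator in this spectral framework. Positive definiteness is then the hypothesis itself. Compactness of the resolvent transfers from $A$ to $\tilde A$ via the identity $\tilde A^{-1} = (I - A^{-1} B)^{-1} A^{-1}$, which expresses $\tilde A^{-1}$ as a bounded operator composed with the compact operator $A^{-1}$. Hence $\tilde A$ admits an orthonormal eigenbasis with positive eigenvalues $0 < \tilde\lambda_1 \le \tilde\lambda_2 \le \cdots$, and the whole machinery of Section 2 applies with $\tilde A$ replacing $A$.

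Next I would verify the hypotheses on $g$ required by Theorem \ref{th-glosol2} (and hence by Theorem \ref{th-stab}). Since $f$ is $C^1$ and $B$ is bounded linear, $g$ is locally Lipschitz, giving (\textsf{F}). Moreover $g(0) = f(0) = 0$ and $g'(0) = f'(0) - B = 0$, so by Fr\'echet differentiability at the origin
$$
\lim_{v \to 0} \frac{\|g(v)\|}{\|v\|} = 0,
$$
i.e.\ the constant $\alpha$ in the hypothesis of Theorem \ref{th-glosol2} is $0$, which trivially lies in $[0, \tilde\lambda_1)$. Assuming additionally that $l \notin L^1(\mathbb R^+)$, a condition inherited from Theorem \ref{th-stab} that is presumably implicit in the statement, Theorem \ref{th-stab} applied to the rewritten problem yields the asymptotic stability of the zero solution.

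The principal obstacle is simply confirming that $\tilde A$ inherits (\textsf{A}) in full, especially the self-adjointness, since this anchors the entire spectral construction of $S(\cdot)$ and $R(\cdot)$ used downstream. Once that is granted, the compact-resolvent statement is a routine perturbation fact and the remainder of the argument is a direct transcription of Theorem \ref{th-stab} under the substitution $f \mapsto g$, $A \mapsto \tilde A$.
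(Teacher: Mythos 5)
Your proposal is correct and follows essentially the same route as the paper: write $\tilde A = A - f'(0)$, $\tilde f(v) = f(v) - f'(0)v$, check (\textsf{A}) and (\textsf{F}) for the rewritten equation, note $\|\tilde f(v)\| = o(\|v\|)$, and invoke Theorem \ref{th-stab} with $\alpha = 0$. You actually supply details the paper glosses over --- the self-adjointness of $f'(0)$ needed for $\tilde A$ to satisfy (\textsf{A}), the transfer of compact resolvent, and the fact that the hypothesis $l \notin L^1(\mathbb R^+)$ from Theorem \ref{th-stab} is tacitly assumed --- all of which are legitimate observations rather than gaps in your argument.
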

\begin{proof}
Denote $\tilde A = A - f'(0)$, $\tilde f(v) = f(v) - f'(0)v$. Then equation \eqref{e1} is equivalent to
\begin{equation}\label{ee}
\frac{d}{dt}[k*(u-u_0)](t) + \tilde A u(t) = \tilde f (u(t)), \; t>0.
\end{equation}
By assumption, $\tilde A$ fulfills (\textsf{A}). Furthermore, $\tilde{f}$ is also continuously differentiable, so it is locally Lipschitzian and, therefore, $\tilde f$ satisfies (\textsf F). Observing that $\|\tilde f(v)\| = o(\|v\|)$ as $\|v\|\to 0$, one can apply Theorem \ref{th-stab} for \eqref{ee} (with $\alpha=0$) to get the conclusion.
\end{proof}
To end this section, we prove the H\"older continuity of the mild solution to \eqref{e1}-\eqref{e2}.
\begin{theorem}\label{th-reg-sm}
Let (\textsf A), (\textsf K*) and (\textsf F) hold. Then the mild solution to \eqref{e1}-\eqref{e2} is H\"older continuous on $[\delta, T]$ for every $0<\delta<T$. 
\end{theorem}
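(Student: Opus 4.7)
The plan is to mirror the proof of Theorem \ref{th-reg} applied to the linear problem \eqref{le1}-\eqref{le2} with forcing $g(t) := f(u(t))$. First I would note that since $u \in C([0,T];H)$ is bounded (say $\|u\|_\infty \le \rho$), hypothesis (\textsf{F}) gives $g \in C([0,T];H)$ with $\|g\|_\infty \le C_1 := \kappa(\rho)\rho + \|f(0)\|$ and $\|g(t)-g(s)\| \le \kappa(\rho)\|u(t)-u(s)\|$; in particular, H\"older continuity of $g$ is equivalent to that of $u$. I would then split $u = u_1 + u_2$ with $u_1(t) = S(t)u_0$ and $u_2(t) = (R*g)(t)$.

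The estimate for $u_1$ reproduces the corresponding step in Theorem \ref{th-reg}: Lemma \ref{lm-reg} provides $\|S'(t)\| \le M/t$, and the mean value inequality combined with $\ln(1+r) \le \gamma^{-1} r^\gamma$ yields $\|u_1(t+h) - u_1(t)\| \le M\|u_0\|\gamma^{-1}\delta^{-\gamma} h^\gamma$ for any $\gamma \in (0,1)$ and $t \in [\delta, T-h]$. For $u_2$ I would reuse the decomposition
\[
u_2(t+h) - u_2(t) = \int_0^t R(\tau)[g(t+h-\tau)-g(t-\tau)]\,d\tau + \int_t^{t+h} R(\tau)g(t+h-\tau)\,d\tau =: J_1 + J_2,
\]
noting that $J_2$ is bounded by $\|A^{-1}\|MC_1\gamma^{-1}\delta^{-\gamma}h^\gamma$ via $\|R(\tau)\| = \|A^{-1}S'(\tau)\| \le \|A^{-1}\|M/\tau$ on $[\delta, T+h]$, exactly as for the term $I_2$ in Theorem \ref{th-reg}.

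The hard part will be $J_1$, since $g = f(u)$ is a priori only continuous, so the bound $\|g(t+h-\tau)-g(t-\tau)\| \le \|g\|_{C^\gamma}h^\gamma$ used in Theorem \ref{th-reg} is not available. Using instead the Lipschitz estimate from (\textsf{F}) and writing $v(t) := \|u(t+h)-u(t)\|$, the inequality for $J_1$ transforms into a Volterra-type bound
\[
v(t) \le C_\delta h^\gamma + \kappa(\rho)\int_0^t r(t-s,\lambda_1)v(s)\,ds, \qquad t \in [\delta, T-h],
\]
but this cannot be closed naively, since for $s \in [0,\delta]$ we only know $v(s)$ is $o(1)$ in $h$ with no quantitative rate (the mild solution need not be H\"older at $t=0$ because $u_0 \in H$ is not assumed to lie in $D(A)$). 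To overcome this I would split the integral at $s = \delta/2$: on $[\delta/2, t]$ the unknown $v$ can be absorbed via a weighted-norm argument as in the proof of Theorem \ref{th-glosol1}, while on $[0,\delta/2]$ the bound $r(t-s,\lambda_1) \le \lambda_1^{-1}M/(t-s) \le 2M/(\lambda_1\delta)$ reduces matters to controlling $\int_0^{\delta/2}v(s)\,ds$, for which the short-time estimate $1-s(h,\lambda_1) \le \lambda_1\int_0^h l(\tau)\,d\tau$ from \eqref{eq-s1}, together with a Cauchy--Schwarz argument on the eigenbasis, supplies the requisite $h$-rate, governed by the behaviour of $l$ at zero under (\textsf{K*}). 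Closing the bootstrap (via Proposition \ref{pp-gronwall} in a weighted form, or by a finite iteration) then gives $v(t) \le Ch^\gamma$, and combining with the estimate for $u_1$ yields $u \in C^\gamma([\delta, T]; H)$.
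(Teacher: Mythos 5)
Your skeleton is the same as the paper's: split $u=u_1+u_2$ with $u_1=S(\cdot)u_0$, $u_2=R*f(u)$, treat $u_1$ and the tail integral $J_2$ via $\|S'(t)\|\le M/t$ and $\ln(1+r)\le\gamma^{-1}r^\gamma$, and close the convolution term with the Gronwall inequality of Proposition \ref{pp-gronwall}. The difference is that the paper closes the loop in one line, applying Proposition \ref{pp-gronwall} directly to $v(t)=\|u_2(t+h)-u_2(t)\|$ on all of $[0,T]$ (its displayed estimate even writes $f(u_2(\cdot))$ where the mild-solution formula produces $f(u(\cdot))$), whereas you correctly point out the two places where this is not innocent: the Lipschitz bound from (\textsf{F}) brings in $\|u(t+h-\tau)-u(t-\tau)\|$, which for $\tau$ near $t$ involves $u_1$ at times near $0$ where it is not H\"older for general $u_0\in H$; and the $J_2$ bound $M\gamma^{-1}\delta^{-\gamma}h^\gamma$ is only valid for $t\ge\delta$, while the Gronwall needs an inequality on all of $[0,t]$. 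Flagging this, and restricting the Volterra inequality to $[\delta,T-h]$ with a separate treatment of $[0,\delta/2]$, is a genuine improvement in rigor over the printed proof (as is your remark that absorption for $\kappa(\rho)\ge\lambda_1$ requires the weighted norm of Theorem \ref{th-glosol1} rather than the bare Proposition \ref{pp-gronwall}).

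The one step of your plan I would not sign off on as written is the tool you propose for $\int_0^{\delta/2}v(s)\,ds$: the bound $1-s(h,\lambda_n)\le\min\{1,\lambda_n(1*l)(h)\}$ fed through Parseval gives a rate that depends on the regularity of $u_0$ (and on the behaviour of $l$ at $0$), and for bare $u_0\in H$ it yields only $o(1)$, not $O(h^\gamma)$. The estimate that actually works is the integrated version of Lemma \ref{lm-reg}: $\int_0^{\delta/2}\|S(s+h)u_0-S(s)u_0\|\,ds\le 2h\|u_0\|+M\|u_0\|\int_h^{\delta/2}\ln(1+h/s)\,ds=O\bigl(h\ln(1/h)\bigr)=O(h^\gamma)$ for any $\gamma\in(0,1)$; similarly $\int_0^{\delta/2}\int_s^{s+h}r(\sigma,\lambda_1)\,d\sigma\,ds\le h\lambda_1^{-1}$ by Fubini and \eqref{eq-sr1}, after which the $u_2$ contribution to $\int_0^{\delta/2}v$ closes by your bootstrap. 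With that substitution your argument goes through and is, if anything, more complete than the paper's.
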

\begin{proof}
Let $u$ be the mild solution to \eqref{e1}-\eqref{e2}. Then
\begin{align*}
u(t) & = S(t)u_0 + \int_0^t R(t-\tau)f(u(\tau))d\tau\\
& = u_1(t) + u_2(t).
\end{align*}
By the same reasoning as in the proof of Theorem \ref{th-reg}, we have $u_1\in C^\gamma([\delta, T];H)$ for every $0<\delta<T$ and $\gamma \in (0,1)$.

Regarding $u_2$, let $\rho=\|u_2\|_\infty$ and $0<\delta\le t \le T$, then we see that
\begin{align*}
\|u_2(t+h) - u_2(t)\| & \le \int_0^t \|R(\tau)\|\|f(u_2(t+h-\tau)) - f(u_2(t-\tau))\|d\tau \\
& \quad + \int_t^{t+h}\|R(\tau)\|\|f(u_2(t+h-\tau))\|d\tau\\
& \le \int_0^t r(\tau, \lambda_1)\kappa(\rho)\|u_2(t+h-\tau)-u_2(t-\tau)\|d\tau\\
& \quad + \|A^{-1}\|\int_t^{t+h}\|S'(\tau)\| (\|f(0)\| + \kappa(\rho)\rho)d\tau\\
& \le \int_0^t r(t-\tau, \lambda_1)\kappa(\rho)\|u_2(\tau+h)-u_2(\tau)\|d\tau\\
& \quad + \|A^{-1}\| M (\|f(0)\| + \kappa(\rho)\rho) \gamma^{-1} \delta^{-\gamma}h^\gamma,
\end{align*}
here we use (\textsf F) and the arguments as in the proof of Theorem \ref{th-reg} for estimating the second integral.

Applying Proposition \ref{pp-gronwall} for $v(t) = \|u_2(t+h) - u_2(t)\|$, one gets
\begin{align*}
\|u_2(t+h) - u_2(t)\| & \le \|A^{-1}\| M (\|f(0)\| + \kappa(\rho)\rho) \gamma^{-1} \delta^{-\gamma}s(t,\lambda_1-\kappa(\rho)) h^\gamma,
\end{align*}
which implies $u_2\in C^\gamma([\delta, T];H)$.
\end{proof}
\section{Application}
Let $\Omega\subset\mathbb R^N$ be a bounded domain with smooth boundary $\partial\Omega$. We apply the obtained results to the following two-term fractional-in-time PDE:
\begin{align}
\partial_t^\alpha u(t,x) + \mu\,\partial_t^\beta u(t,x) + (- \Lambda)^\gamma u(t,x) = &\; F\left(\int_\Omega u^2(t,x)dx\right)G(x,u(t,x)),\label{ap1}\\
& \;\text{ for } t>0, x \in\Omega,\notag\\
u(t,x)  = &\; 0,\; \text{ for } t\ge 0,\;  x\in\partial\Omega, \label{ap2}\\
u(0,x)  = &\; u_0(x),\; \text{ for } x\in\Omega,\label{ap3}
\end{align}
where $0<\alpha<\beta<1$, $\mu\ge 0, \gamma>0$, $\partial_t^\alpha$ and $\partial_t^\beta$ stand for the Caputo fractional derivatives of order $\alpha$ and $\beta$ in $t$, respectively. The operator $\Lambda$ is defined by
$$
D(\Lambda) = H^2(\Omega)\cap H_0^1(\Omega),\;\Lambda u = \sum_{i,j=1}^N\partial_{x_i}(a_{ij}(x)\partial_{x_j} u), 
$$
where $a_{ij}\in L^\infty(\Omega), a_{ij}=a_{ji}, 1\le i,j\le N$, subject to the condition $\sum\limits_{i,j=1}^N a_{ij}(x)\xi_i\xi_j\ge \theta |\xi|^2$, for some $\theta >0$. Let $H=L^2(\Omega)$ with the inner product $\displaystyle (u,v)=\int_\Omega u(x)v(x)dx$. Put
\begin{align}
k(t) & = g_{1-\alpha}(t) + \mu\,g_{1-\beta}(t),\label{ap-k}\\
A & = (-\Lambda)^\gamma,\notag\\
f(v)(x) & = F\left(\int_\Omega v^2(x)dx\right)G(x,v(x)), \; v\in L^2(\Omega).\notag
\end{align}
Then the problem \eqref{ap1}-\eqref{ap3} is in the form of \eqref{e1}-\eqref{e2}. Observe that, the kernel function $k$ is completely monotonic, i.e. $(-1)^n k^{(n)}(t)\ge 0$ for $t\in (0,\infty)$. As mentioned in \cite{VZ}, $k$ admits a resolvent function $l$ such that $k*l=1$ on $(0,\infty)$ and in this case, $(1*l)(t)\sim g_{1+\alpha}(t)$ as $t\to\infty$. Thus
$$
s(t,\lambda_1) \le \frac{1}{1+\lambda_1 (1*l)(t)}\to 0\text{ as } t\to\infty.
$$
Noting that, the nonlinearity in \eqref{ap1} can be seen as a perturbation depending not only on the state but also on the energy of the system.  We assume that 
\begin{itemize}
\item $F\in C^1(\mathbb R)$ obeys the estimate $|F(r)|\le a + b|r|^\nu$, for some nonnegative numbers $a, b$ and $\nu$.
\item $G: \Omega\times\mathbb R\to \mathbb R$ is a Carath\'eodory function and satisfies the Lipschitz condition in the second variable, i.e.
$$
|G(x,y_1)-G(x,y_2)|\le h(x)|y_1-y_2|, \forall x\in\Omega, y_1, y_2\in\mathbb R,
$$
here $h\in L^\infty (\Omega)$ is a nonnegative function. In addition, assume that $G(x,0)=0$ for a.e. $x\in\Omega$.
\end{itemize}
Then one can verify that $f$ maps $L^2(\Omega)$ into itself. More precisely, we get
\begin{align*}
\|f(v)\| & = F\left(\int_\Omega v^2(x)dx\right)\left(\int_\Omega |G(x,v(x))|^2dx\right)^{\frac 12}\\
& \le F\left(\int_\Omega v^2(x)dx\right)\left(\int_\Omega h^2(x)v^2(x)dx\right)^{\frac 12}.
\end{align*}
Hence
\begin{align}\label{ap4}
\|f(v)\| \le (a+b\|v\|^{2\nu})\|h\|_\infty \|v\|,
\end{align}
where $\|h\|_\infty = \text{ess\,sup}_{x\in\Omega} |h(x)|$.
In addition, we can check that $f$ is locally Lipschitzian due to the assumption that $F'$ is continuous and $G$ is Lipschitzian. Specifically, for $v_1, v_2\in L^2(\Omega), \|v_1\|, \|v_2\|\le \rho$, we see that
\begin{align*}
\|f(v_1)-f(v_2)\| & \le |F(\|v_1\|^2) - F(\|v_2\|^2)| \left(\int_\Omega |G(x,v_1(x))|^2 dx\right)^{\frac 12}\\
& \quad + |F(\|v_2\|^2)|\left(\int_\Omega |G(x,v_1(x))-G(x,v_2(x))|^2dx\right)^{\frac 12}\\
& \le |F'\left(\theta\|v_1\|^2 + (1-\theta)\|v_2\|^2\right)|\cdot |\|v_1\|^2-\|v_2\|^2|\cdot \|h\|_\infty \|v_1\|\\
& \quad + (a+b\|v_2\|^{2\nu})\|h\|_\infty \|v_1-v_2\|\\
& \le \kappa(\rho)\|v_1-v_2\|,
\end{align*}
where 
$$
\kappa(\rho) = 2\rho^2\|h\|_\infty\sup_{r\in [0,\rho^2]} |F'(r)| + (a+b\rho^{2\nu})\|h\|_\infty.
$$
Let $\lambda_\triangle$ be the first eigenvalue of $-\Delta$, that is $\lambda_\triangle = \inf\{\|\nabla u\|^2: \|u\|=1\}$. Denote by $\mu_1$ the first eigenvalue of $-\Lambda$ associated with the eigenfunction $\varphi$, then
\begin{align*}
\mu_1\|\varphi\|^2 = (-\Lambda \varphi, \varphi)\ge \theta \|\nabla \varphi\|^2 \ge \theta \lambda_\triangle \|\varphi\|^2,
\end{align*}
thanks to the Poincar\'e inequality. This implies that the first eigenvalue $\lambda_1$ of $A=(-\Lambda)^\gamma$ is given by $\lambda_1 = \mu_1^\gamma \ge  \theta^\gamma \lambda_\triangle^\gamma$. 

On the other hand, it follows from \eqref{ap4} that $\lim\limits_{v\to 0}\dfrac{\|f(v)\|}{\|v\|} \le a\|h\|_\infty$ if $\nu>0$ and $\lim\limits_{v\to 0}\dfrac{\|f(v)\|}{\|v\|}\le (a+b)\|h\|_\infty$ as $\nu=0$. Employing Theorem \ref{th-stab}, we have the conclusion that, the zero solution of \eqref{ap1} is asymptotically stable in the following cases:
\begin{enumerate}
\item $\nu>0$ and $a\|h\|_\infty<\theta^\gamma \lambda_\triangle^\gamma$, 
\item $\nu=0$ and $(a+b)\|h\|_\infty<\theta^\gamma \lambda_\triangle^\gamma$.
\end{enumerate}
Let us mention that, the mild solution for \eqref{ap1}-\eqref{ap4} is H\"older continuous on $[\delta, T]$ for every $0<\delta<T$. Indeed, since $k$ given by \eqref{ap-k} is positive, decreasing and log-convex on $(0,\infty)$, by Remark \ref{rm-K}, the kernel function $l$ is nonincreasing. Moreover, the Laplace transform $\hat l$ is given by $\hat l(\lambda) = (\lambda^{1-\alpha} + \mu \lambda^{1-\beta})^{-1}$. By a direct computation, one has
\begin{align*}
\lambda \hat l'(\lambda) & = - (\lambda^{1-\alpha} + \mu \lambda^{1-\beta})^{-2}[(1-\alpha)\lambda^{1-\alpha} + \mu (1-\beta)\lambda^{1-\beta}]\\
\lambda^2 \hat l''(\lambda) & = 2(\lambda^{1-\alpha} +\mu \lambda^{1-\beta})^{-3}[(1-\alpha)\lambda^{1-\alpha} + \mu (1-\beta)\lambda^{1-\beta}]^2 \\
& \qquad + (\lambda^{1-\alpha} + \mu \lambda^{1-\beta})^{-2}[\alpha(1-\alpha)\lambda^{1-\alpha} + \mu \beta(1-\beta)\lambda^{1-\beta}].
\end{align*}
Observing that, for every $\eta_1, \eta_2\in (0,1)$ and Re$\lambda>0$, 
$$
|\eta_1 \lambda^{1-\alpha} + \eta_2\mu \lambda^{1-\beta}|\le |\lambda^{1-\alpha} + \mu \lambda^{1-\beta}|,
$$
we have
\begin{align*}
| \lambda \hat l'(\lambda) | \le |\hat l(\lambda) |,\;\; | \lambda^2 \hat l''(\lambda) | \le 3|\hat l(\lambda) |,
\end{align*}
which ensures that $l$ is 2-regular, and (\textsf K*) is fulfilled. So the H\"older regularity of the mild solution follows from Theorem \ref{th-reg-sm}.

\end{document}